\newtheorem{thm}{Theorem}[section]
\newtheorem{remark}[thm]{Remark}
\newtheorem{definition}[thm]{Definition}
\newtheorem{example}[thm]{Example}
\newtheorem{alg}[thm]{Algorithm}
\newtheorem{conjecture}[thm]{Conjecture}
\newtheorem*{Satz*}{Satz}
\newtheorem{Lemma}[thm]{Lemma}
\newtheorem{proposition}[thm]{Proposition}
\newtheorem{Corollary}[thm]{Corollary}
\newcommand{\mathset}[1]{{\left\{#1\right\}}}
\newcommand{\absolute}[1]{\left\lvert#1\right\rvert}
\newcommand{\norm}[1]{\left\|#1\right\|}
\DeclareMathOperator{\Spec}{Spec}
\DeclareMathOperator{\Child}{Child}
\DeclareMathOperator{\dg}{dg}
\DeclareMathOperator{\hypergeo}{F}
\DeclareMathOperator{\leaf}{Leaf}
\title{Local ultrametric approximation of graph distance based Laplacian diffusion
}
\author{Patrick Erik Bradley
}
\date{\today}
\begin{document}

\maketitle

\begin{abstract}
The error estimation for eigenvalues and eigenvectors of a small positive symmetric
perturbation on the spectrum of a graph Laplacian is related to Gau{\ss} hypergeometric functions. Based on this, a heuristic polynomial-time algorithm for finding an optimal locally ultrametric approximation of a graph-distance power Laplacian matrix via the Vietoris-Rips graph based on the graph distance function is proposed.
In the end, the error in the solution to the graph Laplacian heat equation given by extension to a locally $p$-adic equation is estimated. 
\end{abstract}

\emph{Keywords:}
Complex systems, ultrametric analysis, graph theory, diffusion, Laplacian spectrum, hypergeometric functions


\section{Introduction}

Complex systems, whether living or not, can under certain aspects be viewed as large networks. These in turn can be modelled as edge-weighted graphs.
In order to simulate processes on complex systems, the graph Laplacian corresponding to the underlying network needs to be processed, as it appears in many equations describing processes, like e.g.\ diffusion. An important step is the computation of eigenvalues of the graph Laplacian. In the case of a very large graph, this is prohibitively inefficient. For this reason, approximative methods for eigenvalue computations are helpful. However, if the kernel function of the graph Laplacian is a function of an ultrametric distance on the vertex set, then this computational issue is much more tractable. The discrepancy between a finite metric and a global ultrametric might be too large, as is known in cluster analysis, here it is proposed to use a more optimal local ultrametric. 
The last ingredient is the theory of $p$-adic graph Laplacians which embeds the discrete graph theory into an analytic theory on continuous spaces. This can be used as a framework in order to capture finite graphs with an indefinite number of vertices and edges, or in order to allow for graph updates. The latter leads to a $p$-adic theory of diffusion on time-dependent graphs \cite{NonAutonomous,Ledezma-Energy}.
\newline

The context of this work is in the application of ultrametric analysis to complex systems, and concretely in the project \emph{Distributed Simulation of Processes in Buildings and City Models} this theory is proposed
as a means to efficiently effect parallel computations of process simulations like heat flows on large graph datasets and includes the assessment of errors with respect to equations with actual, but large, graph Laplacians. 
The topologies of a building or city model are finite and satisfy the $T_0$-separation axiom, and such have a natural graph representation, as was observed by Alexandrov \cite{Alexandrov1937}. 
Inspired by \cite{ZunigaNetworks,Zuniga2022}, ultrametric diffusion on multiple $T_0$-topologies on a finite set is introduced and studied in \cite{IndexTopo_p}.
What would be of interest is to be able to view the theory of ultrametric diffusion on graph-represented topologies as approximating Brownian motion
on continuous ultrametric spaces via some kind of scaling limit, similarly as in 
\cite{Weisbart2024} for the $p$-adic numbers or 
\cite{PW2024} for the $p$-adic integers, and furthermore to view this in some way as an approximation to classical Brownian motion on continuous real spaces.
\newline 

The aims of the present article can be summarised as follows:
\begin{enumerate}
\item Partition a finite weighted graph in such a way that each piece as well as the quotient graph whose vertices are those pieces, are as ultrametric as possible. 

\item Estimate the error in the solution of the heat equation where the Laplacian kernel function is a power of the inverse graph distance between vertices by approximating the graph distance with its subdominant ultrametric.

\end{enumerate}

This approach relates locally hierarchic data to Mumford curves via local $p$-adic encodings of and suitable Radon measures on the pieces within an infinite compact  space, as described in \cite{IndexTopo_p,locUltra}.
\newline

The analytic approach can be summarised as follows: Given a finite metric space $(X,d)$ and its subdominant ultrametric space $(X,\delta)$, the corresponding adjacency matrices $A_d$ and $A_\delta$ satisfy:
\[
A_d=A_\delta + A_\varepsilon
\]
where 
$\varepsilon(x,y)=d(x,y)-\delta(x,y)$ for $x,y\in X$.
Then use perturbation theory as in \cite{Baumgaertel1985}
and \cite[II.1]{Kato1980} to approximate the spectrum of the corresponding Laplacian $L_d$. The two matrices $A_d$ and $A_\delta$ share a common minimum spanning tree (MST) $T$, on which 
\[
d(x,y)=\delta(x,y)
\]
for any edge $(x,y)$ of $T$.
\newline

The first aim is now placed into this context as follows:
Given an edge-weighted finite simple graph $G=(V,E,w)$, partition the vertex set $V$:
\[
V=\bigsqcup\limits_{i=1}^m V_i
\]
and obtain the following induced subgraphs:
\[
G_i=(V_i,E_i,w_i)
\]
by restriction, and the quotient graph
\[
H=(V_H,E_H,w_H)
\]
with 
\[
V_H=\mathset{V_1,\dots,V_{\absolute{V}}}
\]
and 
\[
w_H(e)=\min\mathset{w(e')\mid \text{$e'\in E(G)$ between  representatives of $o(e)$ and $t(e)$}}
\]
where $o(e)$ and $t(e)$ denote the origin and the terminus of edge $e$, respectively.
The task is now to replace each adjacency matrix of $G_i$, as well as that of $H$ with the subdominant ultrametric of the induced graph distance on $G_i$ and then to approximate the spectrum using perturbation theory. 
\newline

The deriviation of  error bounds for approximating the solutions of the heat equation for $G$ with solutions of the new ultrametric Laplacian heat equations is tackled in two ways: first, a general upper bound is proven for each coefficient in the analytic expansion of eigenvalues.
This leads to hypergeometric functions, cf.\ e.g.\ \cite{Yoshida1997}, and is a result of independent interest.
Based on this, the second approach is to give intervals containing the perturbed eigenvalues, and this comes from enclosing the graph distance in between its subdominant ultrametric and a minimal larger rescaled version thereof. This then results in an error bound given by this interval.

\section{Affine-Linear Matrix Perturbations}



Let $A(t)$ for $t\in T$ be a family of $N\times N$ matrices in $\mathds{C}^{N\times N}$.
According to \cite[Matrix Proposition (e)]{KMR2011},
if $T = \mathds{R}\ni t \mapsto A(t)$ is a so-called \emph{$C^Q$-curve} of Hermitian matrices, then the
eigenvalues and the eigenvectors can be chosen to be in the function space 
$C^Q$ which contains the real-analytic functions.
A particular instance of this is when $A(t)$ is an affine-linear map:
\begin{align}\label{alpert}
A(t)=A_0+tA_1
\end{align}
with $A_i$ symmetric real-valued $N\times N$-matrices for $i=0,1$.
Since in this case, the eigenvalues are real-analytic functions, they can in principle be evaluated at $t=1$, in order to calculate the eigenvalues of $A(1)=A_0+A_2$, or at least, in principle, to estimate the error of approximating eigenvalue $\lambda(t)$ with its constant term $\lambda_0$ which is an eigenvalue of $A_0$.
\newline

In the following, it is assumed that in (\ref{alpert}), $A_1$ is positive semi-definite, and speak of a \emph{positive perturbation}.

\subsection{General error bound}

Using the notation from \cite{Bamieh}, write
\[
\Lambda_k=\dg\left(V_0^\top A_1 V_{k-1}\right)
\]
for $k\in\mathds{N}$, where 
$\dg$ converts a square matrix to its diagonal part, $V_0$ is an orthonormal basis of eigenvectors of the  symmetric matrix $A_0$, and
\begin{align}\label{higherCompactFormula}
V_k=V_0\left(
\Pi^{\dagger 0}\circ V_0^\top
\left(
V_0\Lambda_k+V_1\Lambda_{k-1}+\dots+V_{k-1}\Lambda_1-A_1V_{k-1}
\right)
\right)
\end{align}
cf.\ \cite[\S 4]{Bamieh}.
It is assumed that the eigenvalues $\lambda(t)$  and eigenvectors $v(t)$  of $A(t)$ are analytic in $t$. Then $\Lambda_k$ and $V_k$ contain the coefficients of their analytic expansions in the diagonal and in the columns, respectively. 
Using the Frobenius norm 
$\norm{\cdot}_{F}$, obtain the following Proposition, where
$d_{\min}$ is the smallest positive entry of the adjacency matrix $A_0$.

\begin{proposition}\label{k-pert-eigenvector}
The following  holds true:
\begin{align}
\norm{V_k}_F
&\le c(k)\cdot d_{\min}^{-k}\cdot\norm{A_1}_F^k
\end{align}
where $c(k)$ is the integer sequence
\[
c(k)=c(k-1)+\sum\limits_{i=0}^{k-1}c(i)c(k-i-1)
\]
with initial conditions
\[
c(0)=c(1)=1
\]
and $k\ge1$.
\end{proposition}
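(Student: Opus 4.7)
The plan is to prove the bound by strong induction on $k\ge 1$, using the compact recursion (\ref{higherCompactFormula}) to relate $V_k$ to the previously constructed $V_0,\dots,V_{k-1}$ and $\Lambda_1,\dots,\Lambda_k$. The base case $k=1$ follows by unfolding the formula: since $\Lambda_1=\dg(V_0^\top A_1 V_0)$, one has $V_1=V_0\,\Pi^{\dagger 0}\bigl(\Lambda_1-V_0^\top A_1V_0\bigr)$, which after bounding gives $\|V_1\|_F\le d_{\min}^{-1}\|A_1\|_F=c(1)\,d_{\min}^{-1}\|A_1\|_F$.

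For the inductive step I plan to exploit four elementary facts in sequence: (i) orthogonality of $V_0$, so that $\|V_0X\|_F=\|V_0^\top X\|_F=\|X\|_F$, which removes the outer $V_0$ and the inner $V_0^\top$ in (\ref{higherCompactFormula}) from the norm estimate; (ii) the operator-norm estimate $\|\Pi^{\dagger 0}\|_{\mathrm{op}}\le d_{\min}^{-1}$, which follows from the interpretation of $\Pi^{\dagger 0}$ as entrywise division by eigenvalue differences of $A_0$ on off-diagonal positions (and hence requires the smallest positive entry $d_{\min}$ to lower-bound the gap of $A_0$); (iii) $\|\dg(M)\|_F\le\|M\|_F$; and (iv) submultiplicativity $\|XY\|_F\le\|X\|_F\|Y\|_F$. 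Applying (i)--(ii) to (\ref{higherCompactFormula}) and then the triangle inequality yields
\[
\|V_k\|_F\le d_{\min}^{-1}\Bigl(\sum_{i=0}^{k-1}\|V_i\Lambda_{k-i}\|_F+\|A_1V_{k-1}\|_F\Bigr).
\]

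I then estimate each summand. For $\|A_1V_{k-1}\|_F\le\|A_1\|_F\|V_{k-1}\|_F$, the inductive hypothesis contributes exactly the $c(k-1)$ term of the recurrence. For the convolution terms, $\|\Lambda_{k-i}\|_F\le\|\dg(V_0^\top A_1 V_{k-i-1})\|_F\le\|A_1\|_F\|V_{k-i-1}\|_F$ using (i) and (iii); thus $\|V_i\Lambda_{k-i}\|_F\le\|V_i\|_F\|A_1\|_F\|V_{k-i-1}\|_F$. Here the $i=0$ case needs a careful observation: one writes $\|V_0\Lambda_k\|_F=\|\Lambda_k\|_F\le\|A_1\|_F\|V_{k-1}\|_F$ directly by orthogonality, which corresponds to the normalization $c(0)=1$ (no spurious $\sqrt{N}$ factor). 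After substituting the induction hypothesis $\|V_j\|_F\le c(j)d_{\min}^{-j}\|A_1\|_F^j$ and pulling out $d_{\min}^{-k}\|A_1\|_F^k$, the bracketed constant is exactly $c(k-1)+\sum_{i=0}^{k-1}c(i)c(k-i-1)=c(k)$.

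The main obstacle I anticipate is not the algebra of the recursion, which is a clean bookkeeping exercise, but rather the clean handling of item (ii): asserting $\|\Pi^{\dagger 0}\|_{\mathrm{op}}\le d_{\min}^{-1}$ requires that the consecutive eigenvalue gaps of $A_0$ are at least $d_{\min}$. This should be made precise by recalling that $\Pi^{\dagger 0}$ acts on the off-diagonal part of a matrix by division by $\lambda_j-\lambda_i$ in the eigenbasis of $A_0$, and that the smallest positive entry of the adjacency matrix gives a lower bound on these gaps under the hypotheses of the paper. A secondary but minor obstacle is consistently using orthogonality of $V_0$ to avoid the $\|V_0\|_F=\sqrt{N}$ factor at the boundary indices $i=0$ and $i=k-1$ of the convolution, which is exactly what the choice $c(0)=1$ encodes.
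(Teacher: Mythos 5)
Your proposal follows essentially the same route as the paper's proof: strong induction on $k$, applying the triangle inequality and Frobenius submultiplicativity to the recursion (\ref{higherCompactFormula}), bounding $\norm{\Lambda_{k-\ell}}_F\le\norm{A_1}_F\norm{V_{k-\ell-1}}_F$ and the operator $\Pi^{\dagger 0}$ by $d_{\min}^{-1}$, and collecting the constants into exactly the recurrence defining $c(k)$. If anything you are slightly more careful than the paper at the boundary index $\ell=0$, where the paper's displayed chain in effect uses $\norm{V_0}_F\le c(0)=1$ (problematic for $N>1$ since $\norm{V_0}_F=\sqrt{N}$), whereas you invoke orthogonal invariance of the Frobenius norm to write $\norm{V_0\Lambda_k}_F=\norm{\Lambda_k}_F$ directly.
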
 

\begin{proof}
This follows by induction: for $k=0$, the inequality is satisfied, also for $k=1$, where the recursion formula starts. Assume now the inequality valid for $1\le\ell<k$. Then
\begin{align*}
\norm{V_k}_F&\le d_{\min}^{-1}
\sum\limits_{\ell=0}^{k-1}
\norm{V_\ell}_F\norm{\Lambda_{k-\ell}}_F+
d_{\min}^{-1}\norm{A_1}_F\norm{V_{k-1}}_F
\\
&\le
d_{\min}^{-1}\sum\limits_{\ell=0}^{k-1}
\norm{V_\ell}_F\norm{A_1}_F
\norm{V_{k-\ell-1}}_F+d_{\min}^{-1}\norm{A_1}_F\norm{V_{k-1}}_F
\\
&\le d_{\min}^{-1}\norm{A_1}_F\sum\limits_{\ell=0}^{k-1}
\norm{V_\ell}_F\norm{V_{k-\ell-1}}_F+d_{\min}^{-1}\norm{A_1}_F\norm{V_{k-1}}_F
\\
&\le
d_{\min}^{-1}\norm{A_1}_F\sum\limits_{\ell=0}^{k-1}c(\ell)c(k-\ell-1)d_{\min}^{1-k}\norm{A_1}_F^{k-1}
+d_{\min}^{-k}\norm{A_1}_F^kc(k-1)
\\
&= d_{\min}^{-k}\norm{A}_F^kc(k)
\end{align*}
where the last inequality uses the induction hypothesis. This proves the assertion.
\end{proof}

\begin{Corollary}
In  the case of a positive perturbation (\ref{alpert}), the following holds true:
\[
\norm{\Lambda_{k+1}}_F\le
c(k)\cdot d_{\min}^{-k}\cdot\norm{A_1}_F^{k+1}
\]
for $k\ge1$.
\end{Corollary}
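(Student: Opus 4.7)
The plan is to reduce the bound on $\Lambda_{k+1}$ directly to the bound on $\norm{V_k}_F$ already established in Proposition \ref{k-pert-eigenvector}. By definition
\[
\Lambda_{k+1}=\dg\bigl(V_0^\top A_1 V_k\bigr),
\]
so the task is purely to chain together a few standard Frobenius-norm inequalities on the right-hand side.

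The three elementary facts I would invoke are: (i) $\norm{\dg(M)}_F\le\norm{M}_F$, since the diagonal part of a matrix has at most the squared entries of the whole matrix summed; (ii) left multiplication by the orthogonal matrix $V_0^\top$ is an isometry in the Frobenius norm, so that $\norm{V_0^\top N}_F=\norm{N}_F$; and (iii) submultiplicativity of the Frobenius norm, $\norm{AB}_F\le\norm{A}_F\norm{B}_F$. Combining them in that order gives
\[
\norm{\Lambda_{k+1}}_F\le \norm{V_0^\top A_1 V_k}_F=\norm{A_1 V_k}_F\le\norm{A_1}_F\norm{V_k}_F.
\]
Substituting the estimate $\norm{V_k}_F\le c(k)\cdot d_{\min}^{-k}\cdot\norm{A_1}_F^k$ from Proposition \ref{k-pert-eigenvector} immediately yields
\[
\norm{\Lambda_{k+1}}_F\le c(k)\cdot d_{\min}^{-k}\cdot\norm{A_1}_F^{k+1},
\]
which is the claim.

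There is essentially no obstacle: the argument is a one-line bookkeeping consequence of the preceding proposition. The only points I would double-check are that the range $k\ge1$ matches the regime in which Proposition \ref{k-pert-eigenvector} has been proved, and that positivity of $A_1$ is not actually needed in any of the Frobenius-norm estimates used here; the positive-perturbation hypothesis appears to be kept for consistency with the general framing of the section rather than being logically required.
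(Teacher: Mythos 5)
Your argument is exactly the paper's proof: bound $\norm{\Lambda_{k+1}}_F$ by $\norm{V_0^\top A_1 V_k}_F\le\norm{A_1}_F\norm{V_k}_F$ and then invoke Proposition \ref{k-pert-eigenvector}. Your version just spells out the three elementary norm facts more explicitly, and your side remark that positivity of $A_1$ plays no role here is also consistent with the paper, which does not use it in this step.
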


\begin{proof}
This follows from 
\[
\norm{\Lambda_{k+1}}_F\le\norm{V_0^\top A_1 V_{k}}_F\le\norm{A_1}_F\norm{V_{k}}_F
\]
and Proposition \ref{k-pert-eigenvector}.
\end{proof}

\begin{Lemma}\label{coefficient}
It holds true that
\[
c(m)=\frac{(-1)^m\cdot  6^m}{2}\cdot\frac{\sqrt{\pi}\cdot
\sideset{_2}{_1}{\hypergeo}\left(
\frac{1-m}{2},-\frac{m}{2};\frac32-m;\frac19
\right)}{2\Gamma(\frac32-m)\Gamma(m+1)}
\]
for $m\ge3$,
where $\sideset{_2}{_1}{\hypergeo}(a,b;c;z)$ is the Gaussian hypergeometric function.
\end{Lemma}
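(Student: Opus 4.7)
The plan is to translate the quadratic recursion into a functional equation for the ordinary generating function $C(x) = \sum_{k\ge 0} c(k)\, x^k$, extract $c(m)$ by Lagrange inversion, and identify the resulting finite sum with the claimed Gauss hypergeometric expression.

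First, I sum the recursion against $x^k$ and isolate the Cauchy square of $C$. Together with the initial conditions $c(0)=c(1)=1$, this yields the algebraic equation $xC(x)^{2} + (x-1)C(x) + (1-x) = 0$, whose branch with $C(0)=1$ is
\[
C(x) = \frac{(1-x) - \sqrt{1-6x+5x^{2}}}{2x}.
\]
Setting $D := C-1$, this becomes $x = D/(D^{2}+3D+1)$, which is in Lagrange-inversion form. Lagrange inversion then produces, for $m\ge 1$,
\[
c(m) = \frac{1}{m}\,[D^{m-1}](D^{2}+3D+1)^{m} = \frac{1}{m}\sum_{j=0}^{\lfloor (m-1)/2\rfloor}\binom{m}{j+1}\binom{m-1-j}{j}\,3^{\,m-1-2j},
\]
the natural explicit closed form for $c(m)$.

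Second, I recast this finite sum as a terminating Gauss hypergeometric series. The Gauss duplication identity $(a)_{2j} = 4^{j}(a/2)_{j}((a+1)/2)_{j}$, applied to the Pochhammer symbols hidden in the two binomial coefficients together with $(2j)! = 4^{j}(1/2)_{j}\,j!$, collapses the expression into a single ${}_{2}F_{1}$ whose upper parameters are half-integer translates of $-m/2$ and whose argument is a simple rational number determined by the factor $3^{\,m-1-2j}$; the ratio of consecutive summands is then verified to match the Pochhammer ratio of the hypergeometric appearing in the lemma, after appropriate reindexing.

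Third, I convert to the form stated. Here I use the reflection identity $\sqrt{\pi}/(2\Gamma(3/2-m)\Gamma(m+1)) = \binom{1/2}{m}$ to produce the prefactor $(-1)^{m}6^{m}\sqrt{\pi}/(4\Gamma(3/2-m)\Gamma(m+1))$, together with a quadratic hypergeometric transformation (of Goursat/Kummer type) that moves the argument to $1/9$ while shifting the parameter triple to $((1-m)/2,\,-m/2;\,3/2-m)$. The main obstacle is precisely this last step: none of the most elementary Pfaff or Euler moves carries the natural hypergeometric output of Lagrange inversion directly into the form of the statement, so one has to chain a quadratic substitution with careful Gamma-function bookkeeping. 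The hypothesis $m\ge 3$ ensures that all $\Gamma(3/2-m)$, $\Gamma(m+1)$ are well-defined and no spurious cancellations occur; a numerical check at $m=3,4$ serves as a quick sanity test before the full transformation is written out.
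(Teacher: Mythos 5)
Your first two steps are sound and take a genuinely different route from the paper: the paper works with the shifted series $\sum_k c(k)x^{k+1}$, writes down a quadratic functional equation, and reads the coefficients off the binomial expansion of the resulting square root, whereas you use the unshifted series, bring it into Lagrange-inversion form, and obtain the explicit closed form $c(m)=\frac1m\sum_{j}\binom{m}{j+1}\binom{m-1-j}{j}3^{m-1-2j}$. That formula is correct for the recursion as stated with $c(0)=c(1)=1$ (it gives $c(2)=3$, $c(3)=10$, $c(4)=36$), and your functional equation $xC(x)^2+(x-1)C(x)+(1-x)=0$ is the one actually consistent with those initial conditions.

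The gap is in your third step, and it is not a matter of ``careful Gamma-function bookkeeping'': the identification you defer cannot be completed, because the two sides are different sequences. The right-hand side of the Lemma equals $\tfrac12(-6)^m b(m)$ with $(-6)^m b(m)=[x^m]\sqrt{x^2-6x+1}$ --- this is where the factor $6^m$ and the argument $\tfrac19$ come from, namely the expansion of $(x^2-6x+1)^{1/2}$ attached to the paper's functional equation $C^2+(x-1)C+x=0$ --- and it evaluates to $-6$ at $m=3$ and $-22$ at $m=4$, while your closed form gives $10$ and $36$. Your discriminant is $1-6x+5x^2$, not $x^2-6x+1$, so the terminating ${}_2\hypergeo_1$ you would naturally extract has argument $\pm\tfrac59$ rather than $\tfrac19$, and no Goursat/Kummer quadratic transformation can reconcile expressions whose numerical values already disagree. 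The sanity check at $m=3,4$ that you postpone to the end is precisely the step that exposes this; performed first, it shows that step 3 as described must fail. (The mismatch ultimately traces back to the statement itself, whose constants are tied to the paper's functional equation rather than to the recursion with the stated initial conditions, so you should not expect any transformation to close this gap.)
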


\begin{proof}
Let $C(x)$ be a generating series for $c(k)$ like this:
\[
C(x)=\sum\limits_{k=0}^\infty
c(k)x^{k+1}
\]
It safisfies the equation
\[
C(x)^2+(x-1)C(x)+x=0
\]
whose solutions are
\[
C(x)=\frac{x-1}{2}\pm\frac12
\sqrt{(x-1)^2-4x}
=
\frac{x-1}{2}\pm\frac12
\left(x^2-6x+1\right)^{\frac12}
\]
of which the latter term can be expanded as
\begin{align*}
(x^2-6x+1)^{\frac12}&=\sum\limits_{n=0}^\infty
{\frac12\choose n}(x^2-6x)^n
\\
&=\sum\limits_{n=0}^\infty\sum\limits_{k=0}^n
{\frac12\choose n}{n\choose k}(-6)^{n-k}x^{k+n}
\\
&=\sum\limits_{m=0}^\infty
\sum\limits_{k=0}^m{\frac12\choose m-k}{m-k\choose k}(-6)^{m-2k}x^m
\\
&=\sum\limits_{m=0}^\infty
(-1)^m\cdot 6^m\cdot b(m)\, x^m
\end{align*}
where
\[
b(m)=\sum\limits_{k=0}^m{\frac12\choose m-k}{m-k\choose k}\cdot 6^{-2k}
\]
for $m\in\mathds{N}$.
Now, using Lemma \ref{hypergeometricCoefficient} below
 proves the assertion by inspecting the coefficients of the $C(x)$.
 \end{proof}

\begin{Lemma}\label{hypergeometricCoefficient}
It holds true that
\begin{align*}
b(m)
&={\frac12 \choose m}
\sideset{_2}{_1}{\hypergeo}\left(
\frac12-\frac{m}{2},-\frac{m}{2};\frac32-m;\frac19
\right)
\\
&=
\frac{\sqrt{\pi}\cdot
\sideset{_2}{_1}{\hypergeo}\left(
\frac12-\frac{m}{2},-\frac{m}{2};\frac32-m;\frac19
\right)}{2\Gamma(\frac32-m)\Gamma(m+1)}
\end{align*}
for $m\in\mathds{N}$.
\end{Lemma}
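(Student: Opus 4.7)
The plan is to recognise the sum for $b(m)$ as a Gaussian hypergeometric series by converting its two binomial coefficients into Pochhammer symbols and then applying Legendre's duplication formula to collapse a length-$2k$ factorial. Throughout, let $(a)_k=a(a+1)\cdots(a+k-1)$ denote the rising factorial.

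The first step is to factor $\binom{1/2}{m}$ out of $\binom{1/2}{m-k}$. Splitting the falling product $\frac12(\frac12-1)\cdots(\frac12-m+1)$ into its leading $m-k$ factors and its trailing $k$ factors yields
\[
\binom{1/2}{m-k}=\binom{1/2}{m}\cdot\frac{m!}{(m-k)!\,(3/2-m)_k}.
\]
Combining this with $\binom{m-k}{k}=(m-k)!/(k!(m-2k)!)$ and the identity $m!/(m-2k)!=(-m)_{2k}$ reduces the summand to
\[
\binom{1/2}{m-k}\binom{m-k}{k}=\binom{1/2}{m}\cdot\frac{(-m)_{2k}}{k!\,(3/2-m)_k}.
\]

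The key step is Legendre's duplication formula $(a)_{2k}=4^k(a/2)_k((a+1)/2)_k$ with $a=-m$, which gives $(-m)_{2k}=4^k(-m/2)_k((1-m)/2)_k$. Inserting this and simplifying $4^k\cdot 6^{-2k}=(1/9)^k$ transforms $b(m)$ into
\[
\binom{1/2}{m}\sum_{k\ge 0}\frac{((1-m)/2)_k\,(-m/2)_k}{(3/2-m)_k}\cdot\frac{(1/9)^k}{k!},
\]
which is exactly $\binom{1/2}{m}\,\sideset{_2}{_1}{\hypergeo}\!\left(\frac12-\frac{m}{2},-\frac{m}{2};\frac32-m;\frac19\right)$. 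The truncation $k\le m/2$ of the original finite sum comes out automatically: one of $(-m/2)_k$ or $((1-m)/2)_k$ vanishes once $k>m/2$, matching the vanishing of $\binom{m-k}{k}$.

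The second equality in the statement then follows at once from the Gamma-function form of the generalised binomial coefficient, $\binom{1/2}{m}=\Gamma(3/2)/\bigl(\Gamma(m+1)\,\Gamma(3/2-m)\bigr)$, together with $\Gamma(3/2)=\sqrt{\pi}/2$. The only non-routine move in the whole argument is the duplication-formula step, which reshapes the single length-$2k$ Pochhammer $(-m)_{2k}$ as a product of two length-$k$ ones and thereby brings the sum into canonical ${}_2F_1$ shape; the rest is bookkeeping with factorials.
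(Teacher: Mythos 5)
Your proof is correct and rests on the same key idea as the paper's: Legendre's duplication formula is used to split the length-$2k$ product $(-m)_{2k}$ into $4^k(-\tfrac m2)_k(\tfrac{1-m}{2})_k$, after which the sum is visibly a ${}_2F_1$ at $z=\tfrac19$. Your execution is somewhat cleaner, staying entirely at the level of Pochhammer symbols and factorials, whereas the paper routes the same identity through Gamma-function quotients and additionally invokes Euler's reflection formula; your version also gets the normalisation of the generalised binomial coefficient right, namely $\binom{1/2}{m}=\Gamma(\tfrac32)/\bigl(\Gamma(m+1)\Gamma(\tfrac32-m)\bigr)$ with $\Gamma(\tfrac32)=\tfrac{\sqrt{\pi}}{2}$, where the paper's proof writes $\Gamma(\tfrac12)$ in the numerator (a typo, since the stated formula does carry the factor $2$ in the denominator).
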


\begin{proof}
It holds true that
\begin{align*}
\sideset{_2}{_1}{\hypergeo}\left(
\frac12-\frac{m}{2},-\frac{m}{2};\frac32-m;z
\right)=\sum\limits_{k=0}^\infty
\frac{\left(\frac32-\frac{m}{2}\right)_k\left(-\frac{m}{2}\right)_k}{\left(\frac32-m\right)_k}
\frac{z^k}{k!}
\end{align*}
and since
\[
\left(\frac32-\frac{m}{2}\right)_k=0
\]
for $m=2k+1$, and
\[
\left(-\frac{m}{2}\right)_k=0
\]
for $m=2k-2$, whereas
\[
\left(\frac32-m\right)_k\neq0
\]
for any $k,m\in\mathds{N}$, it holds  true that
$\sideset{_2}{_1}{\hypergeo}\left(
\frac12-\frac{m}{2},-\frac{m}{2};\frac32-m;z
\right)$ is in fact a polynomial in $z$ with rational coefficients, and of degree no larger than $\frac{m}{2}$.

\smallskip
Now, observe first that
\[
{\frac12\choose m}=\frac{\Gamma\left(\frac12\right)}{\Gamma(m+1)\Gamma\left(\frac12-m+1\right)}=\frac{\sqrt{\pi}}{\Gamma(m+1)\Gamma\left(\frac12-m+1\right)}
\]
since
\[
\Gamma\left(\frac12\right)=\sqrt{\pi}
\]
holds true. This means that
it suffices to show that
\[
A:={\frac12\choose m}\frac{\left(\frac32-\frac{m}{2}\right)_k\left(-\frac{m}{2}\right)_k}{\left(\frac32-m\right)_k k!}
={\frac12\choose m-k}{m-k \choose k} \cdot 4^{-k}
\]
for $k=0,\dots,\lfloor m/2\rfloor-1$.
For this,
observe that right hand side equals
\begin{align*}
B&:=\frac{\Gamma(\frac12)\cdot 2^{-2k}}{\Gamma\left(\frac32-m+k\right)\Gamma(k+1)\Gamma(m+1-2k)}
\cdot
\frac{\Gamma(m+1)\Gamma\left(\frac32-m\right)}{\Gamma(m+1)\Gamma\left(\frac32-m\right)}
\\
&={\frac12\choose m}\cdot\frac{1}{k!}
\cdot\frac{\Gamma\left(\frac32-m\right)}{\Gamma\left(\frac32-m+k\right)}
\frac{\Gamma(m+1)}{2^{2k}\cdot\Gamma(m+1-2k)}
\end{align*}
Now, according to Legendre's duplication formula \cite[Ch.\ 6.1.18]{AS1972}, it holds true that
\[
\Gamma(m+1-2\ell)=\Gamma\left(\frac12+\frac{m}{2}-\ell\right)
\Gamma\left(1+\frac{m}{2}-\ell\right)
\cdot(2\pi)^{-\frac12}\cdot 2^{m-2k+\frac12}
\]
for $\ell=0,\dots,\lfloor m/2\rfloor-1$. And by Euler's reflection formula \cite[Ch.\ 6.1.17]{AS1972}, it holds true that
\begin{align*}
\Gamma\left(\frac12+\frac{m}{2}-\ell\right)\Gamma\left(\frac12-\frac{m}{2}+\ell\right)
&=\frac{\pi}{\sin\left(\frac{1+m}{2}\cdot\pi\right)}
\\
\Gamma\left(1+\frac{m}{2}-k\right)\Gamma\left(-\frac{m}{2}+\ell\right)
&=\frac{\pi}{\sin\left(\frac{m}{2}\cdot\pi\right)}
\end{align*}
for the same range of $\ell$.
Taking $\ell=0$ and $\ell=k$, this now yields
\begin{align*}
B&={\frac12\choose m}\cdot\frac{1}{k!}
\cdot\frac{\Gamma\left(\frac12-\frac{m}{2}+k\right)}{\Gamma\left(\frac12-\frac{m}{2}\right)}
\cdot\frac{\Gamma\left(-\frac{m}{2}+k\right)}{\Gamma\left(-\frac{m}{2}\right)}\cdot\frac{\Gamma\left(\frac32-m\right)}{\Gamma\left(\frac32-m+k\right)}=A
\end{align*}
as asserted.
\end{proof}

Further properties of the Gamma function can be found in \cite[Ch.\ 6.1]{AS1972}.

\begin{Lemma}\label{GammaStatements}
The following statements hold true:
\begin{align}
\lim\limits_{m\to\infty}
\left[\Gamma\left(
\frac32-m\right)^{-1}
\cdot\Gamma(m+1)^{-1}\right]
&=0
\\[2mm]
\lim\limits_{m\to\infty}\left[\Gamma\left(1-\frac{m}{2}\right)^{-1}
\cdot\Gamma\left(
\frac32-\frac{m}{2}
\right)^{-1}
\cdot\Gamma(m+1)^{-1}\right]
&=0
\end{align}
\end{Lemma}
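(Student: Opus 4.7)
The plan is to reduce both Gamma-function limits to routine asymptotic estimates by applying Euler's reflection formula and Legendre's duplication formula (both already cited in the preceding lemma). In each case, the triple or double Gamma product collapses to something easily bounded, after which the limit is either forced by a pole of $\Gamma$ or by the standard half-integer ratio asymptotic.

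For the first limit, I would first apply Euler's reflection with $z=\frac32-m$ to rewrite
\[
\frac{1}{\Gamma\bigl(\tfrac32-m\bigr)}=\frac{\sin\bigl(\pi(\tfrac32-m)\bigr)\,\Gamma\bigl(m-\tfrac12\bigr)}{\pi}=\frac{(-1)^{m+1}}{\pi}\,\Gamma\bigl(m-\tfrac12\bigr),
\]
using that $\sin(3\pi/2-m\pi)=-\cos(m\pi)=(-1)^{m+1}$ for integer $m$. The quantity to be estimated therefore has absolute value $\Gamma(m-\tfrac12)/(\pi\,\Gamma(m+1))$, and the classical asymptotic $\Gamma(m+a)/\Gamma(m+b)\sim m^{a-b}$ as $m\to\infty$ (a direct consequence of Stirling) shows that this behaves like $m^{-3/2}/\pi$ and hence tends to $0$.

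For the second limit, I would use Legendre duplication at $z=1-m/2$ to merge the two half-integer factors:
\[
\Gamma\bigl(1-\tfrac{m}{2}\bigr)\,\Gamma\bigl(\tfrac32-\tfrac{m}{2}\bigr)=2^{m-1}\sqrt{\pi}\,\Gamma(2-m).
\]
The whole expression then equals $\bigl[2^{m-1}\sqrt{\pi}\,\Gamma(2-m)\,\Gamma(m+1)\bigr]^{-1}$. Since $m\in\mathds{N}$, for every $m\ge 2$ the argument $2-m$ is a non-positive integer, so $\Gamma(2-m)$ has a pole and the reciprocal vanishes identically; the limit is therefore trivially $0$. For a continuous version, another application of reflection bounds $|\Gamma(2-m)^{-1}|$ by $\Gamma(m-1)/\pi$, after which division by the exponentially growing $2^{m-1}\,\Gamma(m+1)$ still forces the conclusion.

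The only step requiring genuine care is the first limit, where one must correctly evaluate $\sin(\pi(\tfrac32-m))$ and then invoke the half-integer Stirling ratio; the second limit essentially reduces to the observation that $\Gamma$ has poles at non-positive integers, so that once the duplication identity has been applied, the claim is immediate.
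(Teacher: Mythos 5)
Your proof is correct, and for the first limit it is actually on firmer ground than the paper's. The paper proves the first statement by Legendre duplication, writing $\Gamma\left(\frac32-m\right)^{-1}=\Gamma(1-m)\,\pi^{-1/2}\,2^{1-2m}\,\Gamma(2-2m)^{-1}$ and then asserting that $\Gamma(1-m)/\Gamma(2-2m)\to 0$; but both $\Gamma(1-m)$ and $\Gamma(2-2m)$ have poles at every integer $m\ge 1$, and if one interprets that quotient as a limit along non-integer arguments it in fact grows like $\Gamma(2m-1)/\Gamma(m)$ --- the actual decay comes from the discarded factors $2^{1-2m}$ and $\Gamma(m+1)^{-1}$. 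Your route via Euler reflection, $\Gamma\left(\frac32-m\right)^{-1}=(-1)^{m+1}\Gamma\left(m-\frac12\right)/\pi$, followed by the Stirling ratio $\Gamma\left(m-\frac12\right)/\Gamma(m+1)\sim m^{-3/2}$, sidesteps this delicacy entirely and even yields the precise rate of decay. For the second limit your argument essentially coincides with the paper's: both merge the two half-integer factors by duplication into $2^{m-1}\sqrt{\pi}\,\Gamma(2-m)$, after which the paper applies reflection to bound $\Gamma(2-m)^{-1}$ by $\Gamma(m-1)/\pi$ times a bounded sine factor, which is exactly your ``continuous version.'' Your sharper observation --- that for every integer $m\ge 2$ one of $\Gamma\left(1-\frac{m}{2}\right)^{-1}$ or $\Gamma\left(\frac32-\frac{m}{2}\right)^{-1}$ vanishes outright, so the second expression is identically zero and the limit is trivial --- is correct and worth flagging: it means the second statement carries no asymptotic content for integer $m$, which in turn suggests that the way this lemma is invoked in Corollary~\ref{zeroSeq} (where the resulting bound would force the manifestly nonzero coefficients $b(m)$ to vanish) deserves a second look.
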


\begin{proof}
It holds true that
\begin{align*}
\Gamma\left(
\frac32-m
\right)^{-1}
&=\Gamma\left(
1-m+\frac12
\right)^{-1}
\\
&=\Gamma(1-m)\cdot
\pi^{-\frac12}\cdot 2^{-1+2(1-m)}\cdot
\Gamma(2-2m)^{-1}
\end{align*}
where the last equality is Legendre duplication \cite[Ch.\ 6.1.18]{AS1972}. 
This now implies the first statement, since
\[
\lim\limits_{m\to\infty}\frac{\Gamma(1-m)}{\Gamma(2-2m)}=0
\]
holds true.

\smallskip
As for the second assertion, observe that 
\begin{align*}
\Gamma\left(\frac32-\frac{m}{2}\right)&=\Gamma\left(1-\frac{m}{2}+\frac12\right)
\\
&=\Gamma\left(1-\frac{m}{2}\right)^{-1}\cdot\sqrt{\pi}\cdot 2^{1-2(1-m/2)}\cdot \Gamma(2-m)  
\end{align*}
where the last equality is again Legendre duplication \cite[Ch.\ 6.1.18]{AS1972}. Now,
\[
\Gamma(2-m)=\Gamma(1-(m-1))
=\Gamma(m-1)^{-1}\frac{\pi}{\sin\left(\frac{m-1}{2}\pi\right)}
\]
which implies that
\[
\Gamma\left(\frac32-\frac{m}{2}\right)
=\frac{\pi^{\frac32}\cdot 2^{m-1}}{\Gamma\left(1-\frac{m}{2}\right)\Gamma(m-1)\sin\left(\frac{m-1}{2}\pi\right)}
\]
from which the second statement follows.
\end{proof}

\begin{Corollary}\label{zeroSeq}
It holds true that
$\lim\limits_{m\to\infty}\absolute{b(m)}=0$.
\end{Corollary}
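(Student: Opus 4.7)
The plan is to invoke the generating-function identity established in the course of proving Lemma \ref{coefficient}, namely
\[
(x^2 - 6x + 1)^{1/2} \;=\; \sum_{m=0}^\infty (-1)^m\, 6^m\, b(m)\, x^m.
\]
The left-hand side is holomorphic on the largest open disk about the origin avoiding the zeros $x = 3 \pm 2\sqrt{2}$ of $x^2 - 6x + 1$, so the radius of convergence of the power series on the right equals $3 - 2\sqrt{2}$.

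Next I would verify the elementary inequality $3 - 2\sqrt{2} > 1/6$, which after clearing denominators and squaring reduces to the strict inequality $289 > 288$. Hence $x = 1/6$ lies inside the disk of convergence, and evaluation of the series there gives
\[
\sum_{m=0}^\infty (-1)^m\, b(m) \;=\; \sum_{m=0}^\infty (-1)^m\, 6^m\, b(m) \cdot (1/6)^m \;=\; (1/36)^{1/2} \;=\; \tfrac{1}{6}.
\]
Since the series on the left converges, its general term $(-1)^m b(m)$ must tend to $0$, whence $\absolute{b(m)} \to 0$ as asserted.

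I see no substantial obstacle along this route. The alternative path through Lemma \ref{hypergeometricCoefficient} combined with Lemma \ref{GammaStatements} would force one to bound the growth in $m$ of the hypergeometric polynomial $\sideset{_2}{_1}{\hypergeo}\bigl(\tfrac{1-m}{2}, -\tfrac{m}{2}; \tfrac{3}{2}-m; \tfrac{1}{9}\bigr)$, whose degree grows linearly with $m$; since such a uniform bound is not supplied by Lemma \ref{GammaStatements}, the generating-function route is preferable. As a bonus, the Cauchy--Hadamard formula applied to the same series yields the strictly stronger statement $\limsup_m \absolute{b(m)}^{1/m} = (3 + 2\sqrt{2})/6 < 1$, i.e.\ exponential decay, which may be useful later in the paper.
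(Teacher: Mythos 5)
Your proof is correct, and it takes a genuinely different route from the paper's. The paper stays inside the hypergeometric representation: via Lemma \ref{hypergeometricCoefficient} it writes $b(m)$ (up to a constant) as a hypergeometric value divided by $\Gamma(\tfrac32-m)\Gamma(m+1)$, bounds $\sideset{_2}{_1}{\hypergeo}(\cdots;\tfrac19)$ by $\sideset{_2}{_1}{\hypergeo}(\cdots;1)$, evaluates the latter by the Gauss summation theorem, and finishes with the Gamma-function limits of Lemma \ref{GammaStatements}. You instead read off $(-1)^m 6^m b(m)$ as the Taylor coefficients at the origin of $(x^2-6x+1)^{1/2}$, observe that the nearest singularity is the branch point at $3-2\sqrt2$, check $3-2\sqrt2>\tfrac16$ (i.e.\ $289>288$), and conclude that $\sum_m(-1)^m b(m)$ converges, forcing $b(m)\to0$. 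Your route is more elementary, and it sidesteps the one delicate step in the paper's argument: the comparison $\sideset{_2}{_1}{\hypergeo}(\cdots;\tfrac19)<\sideset{_2}{_1}{\hypergeo}(\cdots;1)$ is only immediate when the terms of the (terminating) series are nonnegative, which is not obvious for the parameters at hand and is not justified in the paper. Your Cauchy--Hadamard remark also yields the quantitatively sharper statement $\limsup_m \absolute{b(m)}^{1/m}=(3+2\sqrt2)/6<1$, which is precisely the kind of control implicitly used when the series $\sum_k c(k)\left(\norm{A_1}_F/d_{\min}\right)^k$ is summed in the proof of Theorem \ref{errorBound1}, and it would even permit relaxing the hypothesis there from $\norm{A_1}_F<d_{\min}/6$ to $\norm{A_1}_F<(3-2\sqrt2)\,d_{\min}$.
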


\begin{proof}
Up to a positive constant factor, $b(m)$ is the function
\[
\gamma(m)=
\frac{\sideset{_2}{_1}{\hypergeo}\left(\frac12-\frac{m}{2},-\frac{m}{2},\frac32-m;\frac19\right)}{\Gamma\left(\frac32-m\right)\Gamma(m+1)}
\]
Now,
\[
\sideset{_2}{_1}{\hypergeo}\left(
\frac12-\frac{m}{2},-\frac{m}{2},\frac32-m;\frac19
\right)<\sideset{_2}{_1}{\hypergeo}\left(\frac12-\frac{m}{2},-\frac{m}{2},\frac32-m;1\right)
\]
The Gauss summation theorem says that
\[
\sideset{_2}{_1}{\hypergeo}(\alpha,\beta,\gamma;1)
=\frac{\Gamma( \gamma)\Gamma(\gamma-\alpha-\beta)}{\Gamma(\gamma-\alpha)\Gamma(\gamma-\beta)}
\]
holds true, if $\Re(\gamma-\alpha-\beta)>0$ and $\gamma$ is not a negative integer \cite[\S 1.3]{Bailey1935}. Since this
is the case for the entries above, it now follows that
\[
\sideset{_2}{_1}{\hypergeo}\left(
\frac12-\frac{m}{2},-\frac{m}{2},\frac32-m;1
\right)
=\frac{\Gamma\left(\frac32-m\right)\Gamma(1)}{\Gamma\left(1-\frac{m}{2}\right)\Gamma\left(\frac32-\frac{m}{2}\right)}
\]
for $m>1$. Applying Lemma \ref{GammaStatements} now yields the assertion.
\end{proof}

\begin{thm}\label{errorBound1}
Assume (\ref{alpert}).
Then it holds true that
\begin{align*}
\absolute{\lambda(1)-\lambda_0}
&\le C_\lambda\cdot \norm{A_1}_F
\cdot
\left(1+
\frac{\norm{A_1}_F}{\frac{d_{\min}\rule[-1mm]{0pt}{3mm}}{6}}
\right)^{-1}
\\
\norm{v(1)-v_0}_2
&\le
C_v\cdot\norm{A_1}_F
\cdot
\left(
1+\frac{\norm{A_1}_F}{\frac{d_{\min}\rule[-1mm]{0pt}{3mm}}{6}}
\right)^{-1}
\end{align*}
for some $C_\lambda,C_v>0$,
if $\norm{A_1}_F<\frac{d_{\min}\rule[-1mm]{0pt}{3mm}}{6}$. 
\end{thm}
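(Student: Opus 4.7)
The plan is to express both error terms as tails of the analytic power series of $\lambda(t)$ and $v(t)$ at $t=0$, control each coefficient via Proposition \ref{k-pert-eigenvector} and its Corollary, and compress the resulting series in $c(k)$ using the closed-form generating function $C(x)$ derived in the proof of Lemma \ref{coefficient}.

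First, the cited result of \cite{KMR2011} supplies real-analytic expansions $\lambda(t)=\lambda_0+\sum_{k\ge1}\lambda_k t^k$ and $v(t)=v_0+\sum_{k\ge1}v_k t^k$ on a neighbourhood of $t=0$, whose coefficients appear as diagonal entries of $\Lambda_k$ and columns of $V_k$ respectively. Writing $y:=\norm{A_1}_F/d_{\min}$, the Proposition and its Corollary give $\absolute{\lambda_k}\le c(k-1)\norm{A_1}_F\,y^{k-1}$ for $k\ge1$ (using $c(0)=1$ together with the elementary bound $\absolute{\lambda_1}\le\norm{A_1}_F$) and $\norm{v_k}_2\le c(k)\,y^k$. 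Summing term-by-term and setting $t=1$ yields
\[
\absolute{\lambda(1)-\lambda_0}\le\norm{A_1}_F\cdot f(y),\qquad \norm{v(1)-v_0}_2\le f(y)-1,
\]
where $f(y):=\sum_{k\ge0}c(k)y^k$, and the hypothesis $y<1/6$ keeps us strictly inside the disc of convergence (whose true radius is $3-2\sqrt{2}$).

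Next, solving the quadratic $C(x)^2+(x-1)C(x)+x=0$ from the proof of Lemma \ref{coefficient} and picking the branch with $C(0)=0$ gives $C(y)=\tfrac12(1-y-\sqrt{y^2-6y+1})$; rationalising yields the closed form $f(y)=C(y)/y=2/(1-y+\sqrt{y^2-6y+1})$. The heart of the argument is then the pointwise inequality $f(y)(1+6y)\le 4$ on $[0,1/6]$. Clearing denominators and isolating the square root reduces this to $8y-1\le 2\sqrt{y^2-6y+1}$, which is trivial for $y\le 1/8$, while for $y\in(1/8,1/6]$ squaring converts it to the quadratic inequality $60y^2+8y-3\le 0$, whose two real roots turn out to be $-3/10$ and exactly $1/6$.

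Combining these ingredients: for the eigenvalue, $\absolute{\lambda(1)-\lambda_0}\le\norm{A_1}_F f(y)\le 4\norm{A_1}_F(1+6y)^{-1}$, giving $C_\lambda=4$. For the eigenvector one proceeds analogously, observing by the same rationalisation that $f(y)-1=4y/(1-3y+\sqrt{y^2-6y+1})$, and matching this against $C_v\norm{A_1}_F(1+6y)^{-1}$ forces the constant to absorb a factor of $d_{\min}^{-1}$ (for instance $C_v=12/d_{\min}$ works, with equality attained at $y=1/6$). The main obstacle I foresee is not any individual calculation but recognising that the polynomial inequality $60y^2+8y-3\le 0$ is the correct target: its validity on precisely $[-3/10,1/6]$ is what pins the right-hand side to the exact form $(1+\norm{A_1}_F/(d_{\min}/6))^{-1}$ and explains why the hypothesis takes the shape $\norm{A_1}_F<d_{\min}/6$.
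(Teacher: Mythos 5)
Your proof is correct, and at the decisive step it takes a genuinely different route from the paper. Both arguments share the same skeleton: expand $\lambda(t)$, $v(t)$ as power series, bound the $k$-th coefficients by $c(k-1)\norm{A_1}_F\,y^{k-1}$ and $c(k)\,y^k$ via Proposition \ref{k-pert-eigenvector} and its Corollary, and then sum. The paper closes the summation by invoking Lemma \ref{coefficient} and Corollary \ref{zeroSeq}, i.e.\ the hypergeometric closed form $c(k)=\tfrac12(-6)^k b(k)$ together with $b(k)\to 0$, and then compares with a geometric series in $(-6y)$ --- a step that requires some care with signs, since $\sum_k c(k)y^k$ has positive terms and is increasing in $y$ while $(1+6y)^{-1}$ is decreasing, so the stated form really only emerges because the series still converges at $y=1/6<3-2\sqrt{2}$. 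You bypass the hypergeometric machinery entirely: solving the quadratic for the generating function (with the correct branch $C(y)=\tfrac12\bigl(1-y-\sqrt{y^2-6y+1}\bigr)$, fixing a sign slip in the paper's displayed solution) gives $f(y)=2\bigl(1-y+\sqrt{y^2-6y+1}\bigr)^{-1}$, and the bound $f(y)(1+6y)\le 4$ reduces to $60y^2+8y-3\le 0$, whose roots $-3/10$ and $1/6$ explain exactly why the hypothesis is $y<1/6$ and why the right-hand side has the shape $(1+6y)^{-1}$. This buys explicit constants ($C_\lambda=4$, $C_v=12/d_{\min}$, the latter legitimately absorbing $d_{\min}^{-1}$ since $f(y)-1=O(y)$) and sharpness at $y=1/6$, which the paper's qualitative argument does not deliver. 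The only caveat, shared equally by the paper, is the implicit assumption that the Taylor series of $\lambda(t)$ and $v(t)$ at $t=0$ actually represent these functions at $t=1$; your coefficient bounds at least guarantee that the series converge there, since $y<1/6$ places $t=1$ inside the radius $(3-2\sqrt{2})/y>1$.
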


\begin{proof}
According to Lemma \ref{coefficient}, it holds true that
\[
c(k)=\frac12\cdot(-6)^k\cdot b(k)
\]
for $k\ge3$, and with $b(k)\to 0$ according to Corollary \ref{zeroSeq}.  Hence, using Proposition \ref{k-pert-eigenvector}, it now follows that
\begin{align*}
\absolute{\lambda(1)-\lambda_0}
&\le C\norm{A_1}_F
\sum\limits_{k=0}^\infty
\frac{(-6)^m \norm{A_1}_F^k}{d_{\min}^k}
\\
&=C\cdot\norm{A_1}_F\cdot
\left(1+\frac{\norm{A_1}_F}{\frac{d_{\min}\rule[-1mm]{0pt}{3mm}}{6}
}
\right)^{-1}
\end{align*}
for some $C>0$, if $\norm{A_1}_F<\frac{d_{\min}\rule[-1mm]{0pt}{3mm}}{6}$,
 as asserted. The second equality thus  also follows from Proposition \ref{k-pert-eigenvector}.
\end{proof}

The application scenario of this article is to locally replace the graph distance with the subdominant ultrametric. Using the Vietoris-Rips graph from topological analysis, in which clusters are obtained as connected components by eliminating edges of length above a given threshold, 
the following now seems plausible, where
\[
L_0^{(\alpha)}(v,w)=\delta_\epsilon(v,w)^{-\alpha}
\]
is the kernel function to be used for a Laplacian operator on the finite graph, where $\delta_\epsilon$ is the subdominant ultrametric as described above.
The actual Laplacian on a finite weighted graph $G$ used here, is
given by
\[
L(v,w)=d_G(v,w)^{-\alpha}
\]
as its kernel function for $\alpha>0$.

\begin{conjecture}
The Vietoris-Rips clusters for some $\epsilon>0$ do realise the sufficient ultrametric condition of
\[
\norm{L_1^{(\alpha)}}_F<\frac{d_{\min}}{6}
\]
from Theorem \ref{errorBound1}
for each cluster as well as for the inter-cluster graph, and some $\alpha>0$.
\end{conjecture}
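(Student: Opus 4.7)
The plan is to fix an arbitrary threshold $\epsilon>0$ — which in turn fixes the Vietoris--Rips partition $V=\bigsqcup_i V_i$, the induced subgraphs $G_i$ and the quotient graph $H$ — and then to prove that any sufficiently small $\alpha>0$ makes the inequality $\norm{L_1^{(\alpha)}}_F<d_{\min}/6$ hold on each $G_i$ and on $H$ simultaneously. The guiding idea is asymptotic: as $\alpha\to 0^+$, the power function $t^{-\alpha}$ tends to $1$ uniformly on each compact interval in $(0,\infty)$. Hence every entry of the perturbation
\[
L_1^{(\alpha)}(v,w)=d_G(v,w)^{-\alpha}-\delta_\epsilon(v,w)^{-\alpha}
\]
tends to $0$, while the smallest positive entry $d_{\min}=(\max\delta_\epsilon)^{-\alpha}$ of $L_0^{(\alpha)}$ tends to $1$; in particular, the ratio $\norm{L_1^{(\alpha)}}_F/d_{\min}$ tends to $0$.

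To turn this into a quantitative argument, I would apply the mean value theorem to $f(t)=t^{-\alpha}$ on the interval $[\delta_\epsilon(v,w),d_G(v,w)]$, obtaining
\[
\absolute{L_1^{(\alpha)}(v,w)}\le\alpha\cdot\delta_{\epsilon,\min}^{-\alpha-1}\cdot\left(d_G(v,w)-\delta_\epsilon(v,w)\right),
\]
where $\delta_{\epsilon,\min}$ denotes the smallest subdominant distance inside the cluster under consideration. The right-hand side is $O(\alpha)$, while within a Vietoris--Rips cluster the graph distances and subdominant distances are both controlled by the cluster's internal edge weights, which are at most $\epsilon$. Summing the squares of these bounds yields $\norm{L_1^{(\alpha)}}_F\le \alpha\cdot C_{i,\epsilon}$ for a constant $C_{i,\epsilon}$ independent of $\alpha$, while $d_{\min}\ge D_i^{-\alpha}$ stays bounded away from $0$, where $D_i$ is the largest subdominant distance in $V_i$. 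Combining these, the required inequality reduces to $\alpha\cdot C_{i,\epsilon}<D_i^{-\alpha}/6$, which is satisfied for all $\alpha$ below some explicit threshold $\alpha_i(\epsilon)>0$. The same reasoning applies verbatim to $H$, using its own MST and subdominant ultrametric, and since there are only finitely many pieces one takes $\alpha=\min_i \alpha_i(\epsilon)$.

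The main obstacle — and probably the reason this is phrased as a conjecture rather than proved in the body of the article — is the wish for a \emph{natural} value of $\alpha$ (for example $\alpha=1$, or an $\alpha$ matched to a notion of spectral dimension of the cluster) rather than one that is merely forced to be arbitrarily small. Proving the inequality at a prescribed $\alpha$ would require controlling the combinatorial quantity
\[
\sum_{v,w}\frac{\left(d_G(v,w)-\delta_\epsilon(v,w)\right)^2}{\delta_{\epsilon,\min}^{2\alpha+2}}
\]
uniformly in the shape of each Vietoris--Rips cluster, which involves a genuine trade-off between cluster cohesion (small $\epsilon$ forces $d_G$ and $\delta_\epsilon$ close) and the explosion of the denominator when $\delta_{\epsilon,\min}$ is small. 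This is where the real difficulty lies; the soft asymptotic argument sketched above only establishes the conjecture in the regime $\alpha\downarrow 0$, leaving the practically relevant regime $\alpha\asymp 1$ open.
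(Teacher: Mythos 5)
The statement you are proving is stated in the paper as a \emph{conjecture} and is given no proof there, so there is nothing to compare your argument against; it has to stand on its own. Read literally, with $d_{\min}$ taken to be ``the smallest positive entry of the adjacency matrix $A_0$'' as in the sentence preceding Proposition \ref{k-pert-eigenvector}, your soft argument is sound: the graph is finite, every entry of $L_1^{(\alpha)}$ is $O(\alpha)$ by the mean value estimate, $d_{\min}=(\max\delta_\epsilon)^{-\alpha}\to 1$, and taking the minimum of the resulting thresholds over the finitely many clusters and the quotient graph settles the existential claim. You are also right to flag that this trivialises the statement and says nothing about a prescribed $\alpha$.

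The genuine gap is that the quantity $d_{\min}$ which actually enters the perturbation bounds is almost certainly not the minimal matrix entry. The factor $d_{\min}^{-1}$ in the proof of Proposition \ref{k-pert-eigenvector} comes from bounding the reduced resolvent $\Pi^{\dagger 0}$, which is controlled by the minimal gap between distinct eigenvalues of $A_0$, and the Definition immediately following the conjecture explicitly calls $d_{\min}^{C}$ the ``minimal non-trivial spectral distance for the ultrametric Laplacian operator on cluster $C$.'' Under that reading your limit argument collapses: as $\alpha\to 0^{+}$ the operator $L_0^{(\alpha)}$ degenerates to the Laplacian of the uniformly weighted complete graph on the cluster, whose nonzero eigenvalue $N$ has multiplicity $N-1$, so the distinct nonzero eigenvalues of $L_0^{(\alpha)}$ coalesce and the minimal spectral gap is itself $O(\alpha)$ — the same order as $\norm{L_1^{(\alpha)}}_F$. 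Both sides of the inequality then vanish at the same rate, the ratio tends to a graph-dependent constant, and smallness of $\alpha$ decides nothing. So your proposal proves only the weak, entrywise reading; for the reading consistent with the rest of the paper the conjecture remains a genuine finite-size condition on the cluster geometry, exactly the ``trade-off'' you identify in your last paragraph but do not resolve. A complete treatment would at minimum have to state which $d_{\min}$ is meant and, in the spectral-gap case, compare the derivative at $\alpha=0$ of the eigenvalue splittings of $L_0^{(\alpha)}$ with that of $\norm{L_1^{(\alpha)}}_F$.
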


\begin{definition}
A \emph{minimising} Vietoris-Rips graph $\Gamma_\epsilon$ is one such that
\[
M(\epsilon)=\sum\limits_{C\in\mathcal{C}(\Gamma_\epsilon)}
\frac{\norm{L_{1,C}^{(\alpha)}}_F}{\frac{d_{\min}^C\rule[-2mm]{0pt}{3mm}}{6}}
+\frac{\norm{L_{1,H}^{(\alpha)}}_F}{\frac{d_{\min}^H\rule[-2mm]{0pt}{3mm}}{6}}
\]
is minimal, where $d^{C}_{\min}$ is the minimal non-trivial spectral distance for the ultrametric Laplacian operator on cluster $C$, and $d_{\min}^H$ likewise on the quotient graph $H$, defined as the inter-cluster graph for $\Gamma_\epsilon$, whose edge weights are the inter-cluster distances. 
\end{definition}

\subsection{Error estimation via the subdominant ultrametric}

The Courant-Fisher Theorem implies that perturbing a Hermitian matrix $A$ with a positive semidefinite matrix $B$ yields the inequality
\[
\lambda_{A,k}\le\lambda_{A+B,k}
\]
for the $k$-th eigenvalue (in their natural ordering as real numbers). This leads to the following error intervals, given the
 Laplacian matrices:

\begin{align*}
({L}_0f(v))&=\sum\limits_{w\in V}d_G(v,w)^{-\alpha}(f(v)-f(w))
\\
({L}f)(v)&=\sum\limits_{w\in V}\delta(v,w)^{-\alpha}(f(v)-f(w))
\\
({L}'f)(v)&=\sum\limits_{w\in V}\tau^{-\alpha}({L}f)(v)
\end{align*}
where 
$d_G$ is the graph distance of the connected weighted graph $G=(V,E,w)$, $\delta$ the corresponding subdominant ultrametric, and
\[
\tau=\max\mathset{\frac{d_G(v,w)}{\delta(v,w)}\mid v,w\in V,\;v\neq w}\ge 1
\]
the \emph{dominant augmentation factor}, where it is assumed that $G$ has at least two vertices.
This implies that the eigenvalue $\lambda(1)$ of ${L}$, viewed as coming from the perturbation
\[
{L}(t)={L}_0+t{L}_1
\]
with $t=1$, satisfies the property
\begin{align}\label{interval}
\lambda(0)\in\left[\tau^{-\alpha}\lambda(1),\lambda(1)\right]
\end{align}
where $\lambda(0)=\lambda_0\in\Spec({L}_0)$.
The following section deals with estimating this interval in \emph{locally} ultrametric approximations of the Laplacian $L_0$.

\section{Locally ultrametric approximations}

In the following, locally ultrametric approximations of operator $L$ are found using the Vietoris-Rips graph of a finite metric space, and an algorithm for minimising the interval $\left[\tau^{-\alpha}\lambda(1),\lambda(1)\right]$ is given, which yields an error estimate for the perturbation of eigenvalue $\lambda_0$ by $\lambda(1)$. 

\subsection{Vietoris-Rips graph partitioning}

Take the local ultrametric $\delta_\epsilon$ of \cite[\S 2.1]{locUltra} to define an operator $H_\epsilon$
\[
(H_\epsilon f)(v)=\sum\limits_{w\in V}\delta_{\epsilon}(v,w)^{-\alpha}(f(v)-f(w))
\]
where $\delta_\epsilon$ restricted to point pairs in a connected component $C$ (called \emph{cluster}) of the Vietoris-Rips graph $\Gamma_\epsilon$ is the subdominant ultrametric for the restricted graph distance on $C$.
An optimal value of $\epsilon>0$ is defined as given by minimising the spectral distance between ${H}_\epsilon$ and ${L}_0$. The operator $H_\epsilon$ is a \emph{hierarchical Parisi matrix}, similar to the operator defined in \cite[Def.\ 3.1]{IndexTopo_p}.
\newline

A heuristic algorithm for finding an optimal $\epsilon$ is proposed as follows:

\begin{enumerate}
\item For each $\epsilon$-cluster $C$ having at least two vertices, let $\tau_C$ the dominant augmentation factor for $\delta_\epsilon$ as the subdominant ultrametric for the restriction of the graph distance
 to $C\times C$. Otherwise, let $\tau_C=0$.
\item If $\Gamma_\epsilon$ has at least two clusters, let $\tau_{H_\epsilon}$ be the dominant augmentation factor of the subdominant ultrametric for the  metric given as the inter-cluster distances, viewed as a metric on the quotient graph $H_\epsilon$, where each cluster is  collapsed to a vertex. Otherwise, let $\tau_{H_\epsilon}=0$. 
\item Minimise the function
\[
\Phi(\epsilon)=\absolute{\tau_{H_\epsilon}-1}^2+\sum\limits_{C\in\mathcal{C}(\Gamma_\epsilon)}\absolute{\tau_C-1}^2
\]
where $\mathcal{C}(\Gamma_\epsilon)$ is the set of connected components of $\Gamma_\epsilon$, 
and
$\epsilon$ ranges from $0$ to the  maximal distance in $G$.
\end{enumerate}

The minimisers of $\Phi(\epsilon)$ minimise the interval
\[
[\tau^{-\alpha}\lambda(1),\lambda(1)]
\]
for eigenvalue $\lambda(t)$ at value $t=1$ among the possible Vietoris-Rips graphs.

\begin{remark}
In the case of $d$ being an ultrametric, any edge covering of $G$ producing at least two connected components leads to a minimiser of $\Phi(\epsilon)$.
The smallest minimising $\epsilon$ is thus given by a minimal weighted edge covering of  $G$.
\end{remark}

\begin{example}
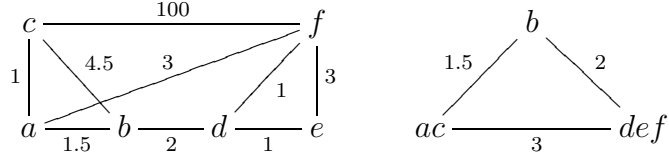
\begin{figure}[h]
\[
\xymatrix{
c\ar@{-}[rrr]^{100}\ar@{-}[dr]^{4.5}&&&f\ar@{-}[d]^3
\\
a\ar@{-}[u]^1\ar@{-}[r]_{1.5}\ar@{-}[urrr]^3&b\ar@{-}[r]_2&d\ar@{-}[r]_{1}\ar@{-}[ur]_{1}&e
}
\qquad
\xymatrix{
&b\ar@{-}[dr]^2
\\
ac\ar@{-}[rr]_{3}\ar@{-}[ur]^{1.5}&&def
}
\]
\caption{A weighted graph, and the quotient graph $H_1$.}\label{graph}
\end{figure}
Given the graph as in Figure \ref{graph} (left), 
the graph $H_1$ is 
depicted in Figure \ref{graph} (right).
Some values of $\Phi(\epsilon)$ are as follows:
\begin{align*}
\Phi(1)&=0^2+1^2+1^2+(0.5)^2=2\frac14
\\
\Phi(1.5)&=29^2+2^2+0^2=845
\\
\Phi(2)&=\frac{100^2}{4.5^2}-1+1=
493.8
\end{align*}
The minimal weight edge covering of this graph is given for $\epsilon=1.5$. However, it is not a  minimsiser for $\Phi(\epsilon)$, since $\Phi(1)$ is smaller. In fact, $\Phi(1.5)$ is even larger than $\Phi(2)$, whereby $\Gamma_2$ is the minimal spanning tree of the graph, i.e.\ $H_2$ is a singleton graph.
\end{example}

The first Betti number $g$ of the quotient graph $H$ for which $\norm{\tau(\epsilon)}_1$ is minimal, is the \emph{optimal augmentation genus}, and the one for which the spectral distance is minimal, is the \emph{optimal Mumford curve genus} for the graph $G=(V,E,w)$.
This name is meant to remind of the approach in 
\cite{IndexTopo_p,locUltra}, where after a $p$-adic embedding, the graph $H$ can be viewed as the skeleton of a Mumford curve.

\begin{remark}
It is an open question how far off an optimiser for $\tau(\epsilon)$ is from the minimiser of the spectral distance between $H_\epsilon$ and $L_0$. Similarly, the question is how different the optimal augmentation genus is from the optimal Mumford curve genus.
\end{remark}

\begin{alg}\label{algorithm}
Proceed along the following steps:
\begin{enumerate}
\item Output set $S=\emptyset$.
\item Let $F_0$ be the MST of $\mathcal{G}$ which is the MST of $G$, and
\[
\epsilon_0=\text{maximal edge length of $F_0$}
\]
Include $\epsilon_0$ into $S$.
\item Sort the edges of $F_0$ according to their lengths in descending order.
\item In Step $k+1$ for $k\in\mathds{N}$, remove from forest $F_k$ all edges with length $\epsilon_k$, and obtain the forest $F_{k+1}$. Let
\[
\epsilon_{k+1}=\text{maximal edge length in $F_{k+1}$}
\]
If $\Phi(\epsilon_{k+1})<\Phi(\epsilon_k)$, then replace all elements of $S$ with $\epsilon_{k+1}$. If $\Phi(\epsilon_{k+1})=\Phi(\epsilon_k)$, then include $\epsilon_{k+1}$ into $S$. If $\Phi(\epsilon_{k+1})>\Phi(\epsilon_k)$, then do not change $S$.
\item Output: the set $S$.
\end{enumerate}
\end{alg}

\begin{thm}
Algorithm \ref{algorithm} terminates and produces the set of minimisers of $\Phi(\epsilon)$ in polynomial time w.r.t.\ the number of vertices of $G$.
\end{thm}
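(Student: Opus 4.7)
My plan is to address termination, correctness, and polynomial complexity in turn, with the central tool being the single-linkage identification between Vietoris--Rips clusters and MST-thresholded subforests. Termination is almost immediate: the MST $F_0$ has exactly $\absolute{V}-1$ edges, and each pass of Step~4 strips from $F_k$ the nonempty set of edges of the current maximal length $\epsilon_k$, so $\absolute{E(F_{k+1})}<\absolute{E(F_k)}$ and the loop halts in at most $\absolute{V}-1$ iterations.

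The core of the proof is correctness, which I would derive from the single-linkage statement: for every $\epsilon>0$ the connected components of $\Gamma_\epsilon$ coincide with those of the subforest of $F_0$ consisting of all MST-edges of length at most $\epsilon$. This rests on the cycle property of minimum spanning trees---any non-MST edge $e$ of $G$ has its endpoints already joined by an $F_0$-path of edges of length at most $w(e)$---so whenever $e\in\Gamma_\epsilon$ its endpoints are already connected in the thresholded MST. Because both the cluster factors $\tau_C$ and the quotient factor $\tau_{H_\epsilon}$ depend only on the cluster partition, $\Phi(\epsilon)$ is piecewise constant with jumps confined to the set of distinct edge lengths of $F_0$. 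The sequence $\epsilon_0>\epsilon_1>\dots>\epsilon_K$ generated by the algorithm enumerates exactly these breakpoints in descending order, so $\mathset{\Phi(\epsilon_0),\dots,\Phi(\epsilon_K)}$ exhausts the distinct values attained by $\Phi$; the update rule then maintains $S$ as the set of breakpoints at which the running minimum is attained, and at termination $S$ coincides with the complete set of minimisers.

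The main obstacle lies in the correctness step, specifically the careful treatment of repeated edge lengths: one must verify that removing all equal-length edges simultaneously corresponds to a single breakpoint of $\Phi$, and that non-MST edges of $G$ whose lengths coincide with some $\epsilon_k$ do not introduce further breakpoints missed by the algorithm. Once this subtlety is settled, polynomial complexity is routine: the MST and its edge sort cost polynomial time in $\absolute{E}$, and each of the at most $\absolute{V}-1$ iterations identifies the components of $F_k$ by union--find, extracts from every intra-cluster MST the restricted subdominant ultrametric and its dominant augmentation factor $\tau_C$ in $O(\absolute{V}^2)$ operations in total, and repeats the same ultrametric construction on the quotient graph $H_{\epsilon_k}$. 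Summing over iterations yields an overall running time polynomial in the size of $G$.
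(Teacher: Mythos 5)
Your proof is correct and follows the same outline as the paper's own (termination because edges are removed from a finite forest at each step, correctness because the distinct MST edge lengths are the only breakpoints of $\Phi$, and polynomial cost from MST construction plus sorting); the paper's argument is a three-line sketch that simply asserts correctness ``by force.'' The single-linkage/cycle-property justification you supply --- that the Vietoris--Rips clusters coincide with the components of the MST thresholded at $\epsilon$, so that $\Phi$ is piecewise constant and no minimiser can occur away from the tested thresholds --- is exactly the detail the paper leaves implicit, so your write-up is, if anything, more complete.
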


\begin{proof}
Termination is guaranteed, since edges from a finite forest are removed in each step. By force, the output consists of the set of $\epsilon$ for which $\Phi(\epsilon)$ is minimal. Since the MST can be found in polynomial time, and all  $\absolute{V}-1$ edges of it are sorted, the time-complexity is polynomial in $\absolute{V}$.  
\end{proof}

\begin{remark}
The function $\Phi(\epsilon)$ punishes singleton clusters by letting each of these contribute to a summand of $\Phi(\epsilon)$. In principle, singletons could be dealt with in different ways. We conjecture that for the present definition of $\Phi(\epsilon)$, that if $d_G$ is not an ultrametric, then neither $H_\epsilon$ nor all $C\in\mathcal(\Gamma_\epsilon)$ are singletons for some optimal value of $\epsilon$.
\end{remark}

\subsection{Laplacian heat equation error estimates using a $p$-adic extension}

The goal is to estimate the error of the solution to the Cauchy problem
\begin{align}\label{CP}
\frac{\partial}{\partial t}
u(t,\cdot)-(L_0u)(t,\cdot)&=0
\\\nonumber
u(0,\cdot)=u_0(\cdot)&\in \mathds{R}^N
\end{align}
when approximated by the solution of the Cauchy problem
\begin{align}\label{CPlu}
\frac{\partial}{\partial t}
u(t,\cdot)-(H_\epsilon u)(t,\cdot)&=0
\\\nonumber
u(0,\cdot)=u_0(\cdot)&\in\mathds{R}^N
\end{align}
where $u_0=u_0(\cdot)$ is a fixed map $V\to\mathds{R}$ given as an
initial condition to both equations, and $\epsilon$ is a minimiser of $\Phi(\epsilon)$ from the previous subsection.
\newline

Useful for this are ultrametric wavelets, as defined in \cite{XK2005}, and which generalise the $p$-adic wavelets first defined by S.\ Kozyrev in \cite{Kozyrev2002}. The theory of ultrametric integral operators having ultrametric wavelets as eigenfunctions is developped in \cite{XK2005}, and this generalises the $p$-adic case of \cite{Kozyrev2004}.
\newline

The ultrametric spaces used in \cite{XK2005} are boundaries of infinite trees which are locally finite, and from the point of view of this article, these are globally ultrametric spaces, whereas here, locally ultrametric spaces are used, which locally look like those ultrametric spaces of \cite{XK2005}.
Hence, we now take an ultrametric extension of each of our finite trees which are locally finite rooted trees, and use the disjoint union of their boundary spaces as our underlying space $Z$.
Applying the methods of \cite{XK2005} on those local pieces yields 
 a Radon measure $\nu$ on $Z$ which is compatible with the locally subdominant ultrametric $d_\epsilon$ in the sense that on each ultrametric patch, the volumes of balls equal to their radii w.r.t.\ $d_\epsilon$. In this way, the vertex set $V$ can be understood as a collection of balls in $Z$ via an ultrametric embedding $V\to Z$, generalising Z\'u\~niga-Galindo's way of producing $p$-adic integral operators for finite graphs in \cite{ZunigaNetworks}. This now gives a framework for viewing the theory of finite graphs as part of ultrametric analysis.
 In order to simplify matters, the locally ultrametric extension of a finite graph $G$ used here is given by a \emph{local} $p$-adic embedding $V\to Z$, where $Z$ is a $p$-adic manifold, and the vertex set of each connected component of $C$ of $\Gamma_\epsilon$ is embedded into  a disc $B_C\subset\mathds{Q}_p$, just like in the Z\'u\~{n}iga case, and such that $Z$ is the disjoint union of the sets $B_C$. And again, $\nu$ is a Radon measure on $Z$ compatible with $d_\epsilon$ in this setting. The details of obtaining $\nu$ can be found in  \cite[\S 3.3]{IndexTopo_p} and \cite[\S 2.2]{locUltra}, where it is called \emph{equity measure}.
 \newline

A simplification arises via $L_\epsilon$ which replaces also the quotient metric with its subdominant ultrametric. 
\newline

In the following, we adapt  the notation from \cite{XK2005} to our setting as follows:

\begin{thm}\label{locallyUltrametricSpectrum}
The space $L^2(Z,\nu)$ has an orthonormal basis consisting of eigenfunctions of $H_\epsilon$, which in turn consists of 
 of ultrametric wavelets $\psi_{B,j}$ 
supported in the clopen subsets of $Z$ corresponding to the connected components of $\Gamma_\epsilon$, and 
an eigenbasis of the
Laplacian matrix associated with
\[
\nu(U_{C'})(d_\epsilon(C,C')^{-\alpha})_{C,C'\in \mathcal{C}(\Gamma_\epsilon)}
\]
for the measure $\mu$ on $V$ induced by the Radon measure $\nu$ on $Z$.
The eigenvalues corresponding to the ultrametric wavelets $\psi_{B,j}$ are
\[
\lambda_{B}=
\int_{Z\setminus B}
d_\epsilon(B,y)^{-\alpha}\,d\nu(y)
+\nu(B)^{1-\alpha}
\]
for $B\subset Z$ the disc  
corresponding to a non-root vertex of a local $\delta_\epsilon$-tree $T$.
\end{thm}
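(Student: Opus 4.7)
The plan is to split the Hilbert space orthogonally as
\[
L^2(Z,\nu)=\bigoplus_{C\in\mathcal{C}(\Gamma_\epsilon)}W_C\;\oplus\;\mathcal{F},
\]
where $W_C$ consists of $\nu$-mean-zero functions supported on the cluster disc $B_C\subset Z$, and $\mathcal{F}$ is the finite-dimensional space of functions that are constant on each $B_C$. The goal is to show that $H_\epsilon$ preserves each summand, that $W_C$ carries the Kozyrev-type ultrametric wavelet basis of \cite{XK2005} as an eigenbasis, and that the restriction to $\mathcal{F}$ is exactly the finite weighted Laplacian described in the statement.

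First I would apply the wavelet theory of \cite{XK2005} locally on each compact ultrametric space $(B_C,d_\epsilon)$, which is legitimate because the local $p$-adic embedding $V\to Z$ of \cite{IndexTopo_p,locUltra} equips $B_C$ with the equity measure, for which the volume of a ball equals its radius. Then, imitating Kozyrev's original calculation \cite{Kozyrev2002}, I would write
\[
(H_\epsilon\psi_{B,j})(x)=\psi_{B,j}(x)\int_Z d_\epsilon(x,y)^{-\alpha}d\nu(y)-\int_Z d_\epsilon(x,y)^{-\alpha}\psi_{B,j}(y)d\nu(y)
\]
and exploit the fact that, by the strong triangle inequality, the kernel $d_\epsilon(x,\cdot)^{-\alpha}$ is constant on every maximal subball of $B$ not containing $x$. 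Pairing the mean-zero wavelet $\psi_{B,j}$ against such a locally constant kernel annihilates every contribution coming from outside the branch of $B$ through $x$, and after telescoping along the chain of subballs of $B$ containing $x$ the surviving terms rearrange into the advertised
\[
\lambda_B=\int_{Z\setminus B}d_\epsilon(B,y)^{-\alpha}d\nu(y)+\nu(B)^{1-\alpha}.
\]

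For the complementary subspace $\mathcal{F}$, I would identify $\mathcal{F}\cong\mathds{R}^{\mathcal{C}(\Gamma_\epsilon)}$ via $\mathbf{1}_{B_C}\mapsto e_C$. Since wavelets integrate to zero and the kernel depends only on the pair of clusters when its arguments lie in distinct $B_C$'s, $H_\epsilon$ preserves $\mathcal{F}$, and its restriction is, after absorbing the measures $\nu(B_{C'})$, exactly the symmetric weighted Laplacian associated with the matrix $\nu(U_{C'})\,d_\epsilon(C,C')^{-\alpha}$. Diagonalising this finite symmetric matrix by the real spectral theorem furnishes the remaining eigenvectors, and assembling them with the wavelets from all clusters yields the asserted orthonormal eigenbasis.

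The most delicate step, I expect, will be pinning down the self-interaction term $\nu(B)^{1-\alpha}$: one has to run the telescoping of $\int_B d_\epsilon(x,y)^{-\alpha}d\nu(y)$ along the chain of subballs of $B$ containing $x$ and check that the result is independent of $x\in B$ and collapses cleanly to this one expression, which is exactly what the equity property of $\nu$ from \cite[\S 3.3]{IndexTopo_p} and \cite[\S 2.2]{locUltra} is designed to guarantee. Once that identity is in place, invariance of the $W_C$ and of $\mathcal{F}$ under $H_\epsilon$, and hence completeness of the asserted eigenbasis, follow routinely from the ultrametricity of $d_\epsilon$ inside each cluster.
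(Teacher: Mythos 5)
Your proposal is correct and follows essentially the same route as the paper, which simply cites \cite[Thm.\ 10]{XK2005} applied locally on each cluster for the wavelet eigenfunctions and \cite[Thm.\ 10.1]{ZunigaNetworks} for the remaining eigenfunctions coming from the finite inter-cluster Laplacian matrix together with the orthogonal decomposition of $L^2(Z,\nu)$. Your splitting into mean-zero wavelet spaces $W_C$ plus the locally constant subspace $\mathcal{F}$, the Kozyrev-style telescoping for $\lambda_B$, and the role of the equity measure in producing $\nu(B)^{1-\alpha}$ are exactly the content the paper delegates to those references.
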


This combines \cite[Thm.\ 3.6]{IndexTopo_p}, where the vertices of a graph are interpreted as $p$-adic discs, and \cite[Thm.\ 3.13]{IndexTopo_p} where a finite ultrametric tree is $p$-adically extended.

\begin{proof}
Proceed as in \cite[Thm.\ 3.6]{IndexTopo_p}, but using 
\cite[Thm.\ 10]{XK2005} locally instead of \cite[Thm.\ 3]{Kozyrev2004} globally for the ultrametric wavelets as eigenfunctions. The other eigenfunctions, as well as the orthogonal decomposition of $L^2(Z,\nu)$, are derived in a similar way as in \cite[Thm.\ 10.1]{ZunigaNetworks}.
\end{proof}



Finite trees also have corresponding wavelets on their boundary, cf.\
\cite{GNC2010}, where they are called Haar-like wavelets. In \cite[Thm.\ 3.13]{IndexTopo_p}, these were
extended to $p$-adic domains and seen as eigenfunctions of certain ultrametric integral operators defined on these.

\begin{Corollary}
The space $L^2(Z,\nu)$ has an orthonormal basis consisting of eigenfunctions of $L_\epsilon$, which in turn consist of ultrametric wavelets supported in the clopen subsets of $Z$ corresponding to the connected components of $\Gamma_\epsilon$, and an eigenbasis of the Laplacian matrix associated with
\[
\nu(U_C')(\delta_\epsilon(C,C')^{-\alpha})_{C,C'\in\mathcal{C}(\Gamma_\epsilon)}
\]
for the measure $\mu$ on $V$ induced by Radon measure $\nu$. The corresponding eigenvectors are the Haar-like wavelets from \cite{GNC2010} with 
eigenvalues
\[
\alpha_B
=\int_{\delta_\epsilon(B,w)\ge\rho}
\delta_\epsilon(B,w)^{-\alpha}
\,d\mu(w)
+\mu(B)^{1-\alpha}
\]
for $B\subset V$ a disc of radius $\rho$ w.r.t.\ the ultrametric $\delta_\epsilon$.
\end{Corollary}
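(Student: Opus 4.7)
The plan is to mirror the two-step proof of Theorem \ref{locallyUltrametricSpectrum}, exploiting the additional fact that $L_\epsilon$ replaces the inter-cluster metric by its subdominant ultrametric. This means the quotient operator — which in Theorem \ref{locallyUltrametricSpectrum} needed only a generic eigenbasis — now itself falls within the ultrametric wavelet framework and therefore admits an explicit Haar-like wavelet basis.

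First, I would decompose $L^2(Z,\nu)$ orthogonally with respect to the clopen partition $Z = \bigsqcup_C B_C$ given by the discs indexed by the connected components $C$ of $\Gamma_\epsilon$. On each $B_C$, the ultrametric $\delta_\epsilon$ restricts to the local subdominant ultrametric, so the argument from the proof of Theorem \ref{locallyUltrametricSpectrum} — applying \cite[Thm.\ 10]{XK2005} locally — produces an orthonormal family of ultrametric wavelets $\psi_{B,j}$ supported in $B_C$ that are eigenfunctions of the integral operator with kernel $\delta_\epsilon(\cdot,\cdot)^{-\alpha}$.

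Second, I would treat the orthogonal complement, which consists of functions constant on each $B_C$. On this subspace $L_\epsilon$ restricts to the finite Laplacian matrix with entries $\nu(U_{C'})\,\delta_\epsilon(C,C')^{-\alpha}$. Because $\delta_\epsilon$ on inter-cluster pairs is ultrametric by construction of $L_\epsilon$, this finite matrix is precisely of the form handled by \cite[Thm.\ 3.13]{IndexTopo_p} together with the Haar-like wavelet construction of \cite{GNC2010}. The resulting Haar-like wavelets provide an orthonormal eigenbasis of the complement, with eigenvalues given by the standard ultrametric integral-operator formula: the kernel integral over the complement of a disc $B$ of radius $\rho$ (whose points lie at distance $\ge \rho$ from $B$ by ultrametricity) together with the self-mass term $\mu(B)^{1-\alpha}$. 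The union of the two bases then furnishes the desired orthonormal eigenbasis of $L^2(Z,\nu)$.

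The main obstacle I expect is the measure-theoretic bookkeeping between the Radon measure $\nu$ on the continuous extension $Z$ and the induced discrete measure $\mu$ on $V$: one must verify that Haar-like wavelets defined intrinsically on the finite quotient tree lift correctly to orthonormal vectors in $L^2(Z,\nu)$, and that the volume-radius compatibility of the equity measure delivers the precise eigenvalue formula claimed. Since \cite[\S 3.3]{IndexTopo_p} and \cite[\S 2.2]{locUltra} already establish this compatibility, the remaining work is essentially transcription from the $H_\epsilon$ setting to the $L_\epsilon$ setting rather than new analysis.
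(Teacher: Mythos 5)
Your proposal follows essentially the same route as the paper: ultrametric wavelets obtained by applying Theorem~10 of Khrennikov--Kozyrev locally on each cluster, plus Haar-like wavelets for the quotient matrix whose kernel is now ultrametric, with the eigenvalue formula coming from the disc-complement integral and the equity measure's volume--radius compatibility turning $\mu(B)\rho^{-\alpha}$ into $\mu(B)^{1-\alpha}$. The only ingredient the paper makes explicit that you delegate to the cited references is the completeness of the finite Haar-like wavelet system together with the constants, which it verifies via the counting identity $\sum_{v\in V(T)\setminus\leaf(T)}(\absolute{\Child(v)}-1)=\absolute{\leaf(T)}-1$ for the cluster tree.
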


\begin{proof}
Since the only new statement is the eigenvalue $\alpha_B$, it suffices to calculate it.

\smallskip
First, it needs to be shown that the Haar-like wavelets supported in an ultrametric disc w.r.t.\ the finite ultrametric $\delta_\epsilon$ are indeed eigenfunctions. For this, the proof of \cite[Thm.\ 2]{XK2005} can be used, since it shows also in the finite ultrametric case that the Haar-like wavelets are mutually orthogonal. In order to show that these eigenfunctions together with the constant function $1$ span the function space on the finite ultrametric space, observe  that for any finite rooted tree $T$, the following formula holds true:
\[
\sum\limits_{v\in V(T)\setminus\leaf(T)}(\absolute{\Child(v)}-1)=
\absolute{\leaf(T)}-1,
\]
where $\Child(v)$ is the set of child nodes of $v$, and $\leaf(T)$ the set of all leaf nodes of $T$.
This can be seen by induction over the generations of a rooted tree.

\smallskip
Secondly, the corresponding eigenvalues can now be computed following \cite[Thm.\ 10]{XK2005}, since the condition of having only infinite geodesics from the root vertex is needed only in the case of an infinite ultrametric space. 
Namely, first observe that 
\begin{align*}
\alpha_B\psi(x)&=\int_{\delta(B,w)>\rho}
\delta_\epsilon(B,w)^{-\alpha}\,d\mu(w)\,\psi(x)
\\
&+
\int_{\delta(x,y)\le\rho}
\delta_\epsilon(x,w)^{-\alpha}(\psi(x)-\psi(w))\,d\mu(w)
\end{align*}
where $x\in B$, and also the second integral is readily seen to not depend on $x\in B$, all by using the explicit form of the Haar-like wavelet $\psi(x)$ in \cite[Def.\ 3.11]{IndexTopo_p} (where it is called ultrametric wavelet),
and then the eigenvalue $\alpha_B$ can be written as
\[
\alpha_B
=\int_{\delta_\epsilon(B,w)\ge\rho}
\delta_\epsilon(B,w)^{-\alpha}
\,d\mu(w)
+\mu(B)\cdot\rho^{-\alpha}
\]
for $\alpha>0$, where the second term is calculated as in the proof of \cite[Thm.\ 10]{XK2005}. This implies the asserted eigenvalue.
\end{proof}

Write now the heat equation on the quotient graph $\Gamma_\epsilon/\sim$ as
\begin{align}\label{CP_A}
\frac{\partial}{\partial t}u_{\overline{A}(\tau)}(x,t)=\overline{A}(
\tau
)u_{\overline{A}(\tau)}(x,t)
\end{align}
with initial condition $u_{\bar{A}}(x,0)=u_0(x)$, using the decomposition
\[
\overline{A}_\epsilon(\tau)=\overline{L}_\epsilon+\overline{B}_{\epsilon}\tau
\]
with
\[
\overline{A}(1)=\overline{H}_\epsilon
\]
the ultrametric Laplacian on the cluster graph $\Gamma_\epsilon/\!\!\sim$. Notice that $\tau$ and $t$ are meant to be two independent parameters.
Now, expand the initial condition $u_0$ as
\[
u_0=\sum\limits_{\lambda(\tau)\in\Spec A(\tau)}
\alpha_\lambda(\tau) v_\lambda(\tau)
\]
\newline

Let $u_{L_0}$, $u_{H_\epsilon}$  be solutions to (\ref{CP}), (\ref{CPlu}),  respectively having the same initial condition $u_0\in\mathds{C}^N$.
The variable in the corresponding heat equations for the quotient graph $\Gamma_\epsilon/\!\!\sim$
is written as $u_{\overline{H}_\epsilon}$. The equation for $\overline{L}_\epsilon$ is assumed to use the variable $u_{\overline{L}_\epsilon}$.
Let
\begin{align*}
\Psi_A(\bullet,\epsilon)
&=C_\bullet\norm{\overline{B}_\epsilon}_F
\left(1+\frac{\norm{\overline{B}_\epsilon}_F}{\frac{d_{\min}}{6}}
\right)^{-1}
\\
\Psi_{H_\epsilon}(\bullet,\epsilon)&
=C_\bullet\norm{A_\epsilon}_F
\left(1+\frac{\norm{A_\epsilon}_F}{\frac{d_{\min}}{6}}\right)^{-1}
\end{align*}
where in the first case $\bullet\in\mathset{v,\lambda}$,
and in the second case $\bullet\in\mathset{w,\gamma}$.
These elements stand for eigenvectors $v(\tau)$  of $\overline{A}_\epsilon(\tau)$ with eigenvalue $\lambda(\tau)$ in a fixed onb of $\mathds{C}^{b_0(\Gamma_\epsilon)}$, and 
eigenvectors
$w(\tau)$ of $H_\epsilon(\tau)$ with eigenvalue $\gamma(\tau)$ in a fixed onb of $\mathds{C}^N$.

\begin{thm}
Assume that an initial condition for (\ref{CP_A}) is given as
\[
\sum\limits_{\lambda(\tau)\in\Spec A(\tau)}\alpha_{\lambda(\tau)}v_{\lambda(\tau)},\quad
\sum\limits_{\gamma(\tau)\in\Spec H_\epsilon(\tau)}
\alpha_{\gamma(\tau)}w_{\gamma(\tau)}
\]
with
\[
a_A=(\alpha_{\lambda(1)})
\in\mathds{C}^{b_0(\Gamma_\epsilon)},
\quad
a_{H}=(\alpha_{\gamma(1)})\in\mathds{C}^N\,.
\]
Then it holds true that
\begin{align*}
\norm{u_{\overline{L}_\epsilon}-u_{\overline{H}_\epsilon}}&
\le\left(
e^{-\lambda(1)t}\Psi_A(v,\epsilon)
+2e^{-\Psi_A(\lambda,\epsilon)t}
\right)\norm{a_A}_1
\\
\norm{u_{L_0}-u_{H_\epsilon}}
&\le
\left(
e^{-\gamma(1)t}\Psi_{H}(w,\epsilon)
+2e^{-\Psi_H(\gamma,\epsilon)t}
\right)\norm{a_H}_1
\end{align*}
for 
and $t\ge0$.
\end{thm}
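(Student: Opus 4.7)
The plan is to expand both solutions in the spectral bases of the two operators and compare termwise, invoking Theorem \ref{errorBound1} as a black box for the per-eigenpair perturbations. Since $\overline{A}(\tau) = \overline{L}_\epsilon + \tau\overline{B}_\epsilon$ is an affine-linear positive perturbation in the sense of (\ref{alpert}), its eigenpairs $(\lambda(\tau), v_{\lambda(\tau)})$ depend analytically on $\tau\in[0,1]$, and the coefficients of a fixed initial datum in these eigenbases do so as well. The semigroup solutions can therefore be written as
\[
u_{\overline{L}_\epsilon}(t) = \sum_{\lambda} \alpha_{\lambda(0)} e^{-\lambda(0) t} v_{\lambda(0)}, \qquad u_{\overline{H}_\epsilon}(t) = \sum_{\lambda} \alpha_{\lambda(1)} e^{-\lambda(1) t} v_{\lambda(1)},
\]
summed over the indexed spectrum of $\overline{A}(\tau)$.

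Next, I would subtract and add $\alpha_{\lambda(1)} e^{-\lambda(1)t} v_{\lambda(0)}$ inside each summand, producing the decomposition
\[
u_{\overline{L}_\epsilon}(t) - u_{\overline{H}_\epsilon}(t) = \sum_{\lambda} \alpha_{\lambda(1)} e^{-\lambda(1) t}\bigl(v_{\lambda(0)} - v_{\lambda(1)}\bigr) + \sum_{\lambda} \alpha_{\lambda(1)}\bigl(e^{-\lambda(0) t} - e^{-\lambda(1) t}\bigr) v_{\lambda(0)},
\]
after absorbing the coefficient drift between $\tau=0$ and $\tau=1$ into the eigenvector term. The triangle inequality then separates the estimate into an \emph{eigenvector-defect} contribution and an \emph{eigenvalue-defect} contribution.

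For the eigenvector-defect sum, Theorem \ref{errorBound1} applied to the family $\overline{A}(\tau)$ yields $\norm{v_{\lambda(0)} - v_{\lambda(1)}}_2 \le \Psi_A(v,\epsilon)$, so that this sum is bounded in norm by $e^{-\lambda(1) t}\Psi_A(v,\epsilon)\norm{a_A}_1$. For the eigenvalue-defect sum, Theorem \ref{errorBound1} gives $\absolute{\lambda(0) - \lambda(1)} \le \Psi_A(\lambda,\epsilon)$; combined with the elementary inequalities $\absolute{e^{-a}-e^{-b}} \le e^{-a} + e^{-b}$ and $e^{-\lambda(0) t} \le e^{-(\lambda(1)-\Psi_A(\lambda,\epsilon))t}$, this yields $\absolute{e^{-\lambda(0)t} - e^{-\lambda(1)t}} \le 2 e^{-\Psi_A(\lambda,\epsilon) t}$ after absorbing $e^{-\lambda(1)t}\le 1$, giving the second term. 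The second bound is obtained by applying the identical argument to the analytic family $H_\epsilon(\tau)$, with $(\gamma(\tau), w_{\gamma(\tau)})$ in place of $(\lambda(\tau), v_{\lambda(\tau)})$ and $\Psi_H, a_H$ in place of $\Psi_A, a_A$.

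The main obstacle will be the exponential-difference estimate. The stated right-hand side contains $e^{-\Psi_A(\lambda,\epsilon)t}$ rather than the more obvious $e^{-\lambda(1) t}$, so one must carefully arrange the cancellation between $e^{-\lambda(0)t}$ and $e^{-\lambda(1)t}$ using only the small spacing $\Psi_A(\lambda,\epsilon)$, presumably by a regime split into small $t$ (where the trivial bound $\absolute{e^{-\lambda(0)t} - e^{-\lambda(1)t}}\le 1$ is already $\le 2 e^{-\Psi_A(\lambda,\epsilon) t}$) and large $t$ (where both exponentials are small and the mean value theorem applied to $s\mapsto e^{-st}$ on $[\lambda(0), \lambda(1)]$ delivers the decay rate $e^{-\Psi_A(\lambda,\epsilon) t}$).
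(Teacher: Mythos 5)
Your overall strategy---expand both solutions in the two orthonormal eigenbases, split the termwise difference into an eigenvector defect and an eigenvalue defect, and feed Theorem \ref{errorBound1} into each---is the same as the paper's. But there is a genuine gap where you claim to ``absorb the coefficient drift between $\tau=0$ and $\tau=1$ into the eigenvector term.'' The exact termwise difference is
\begin{align*}
\alpha_{\lambda(0)}e^{-\lambda(0)t}v_{\lambda(0)}-\alpha_{\lambda(1)}e^{-\lambda(1)t}v_{\lambda(1)}
&=\alpha_{\lambda(1)}e^{-\lambda(1)t}\bigl(v_{\lambda(0)}-v_{\lambda(1)}\bigr)
+\alpha_{\lambda(1)}\bigl(e^{-\lambda(0)t}-e^{-\lambda(1)t}\bigr)v_{\lambda(0)}\\
&\quad+\bigl(\alpha_{\lambda(0)}-\alpha_{\lambda(1)}\bigr)e^{-\lambda(0)t}v_{\lambda(0)},
\end{align*}
and the third summand cannot be folded into the first: it is a separate contribution controlled by $\norm{a_A-b_A}_1$, where $b_A=(\alpha_{\lambda(0)})$ collects the coefficients of $u_0$ in the unperturbed eigenbasis, and this quantity is \emph{not} small --- it is generically of the same order as $\norm{a_A}_1$ itself. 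The paper handles it by observing that $b_A=Oa_A$ for the unitary change of basis $O$ between the two orthonormal eigenbases, whence $\norm{a_A-b_A}_1\le\norm{1-O}\,\norm{a_A}_1\le 2\norm{a_A}_1$; this is precisely where the factor $2$ in the term $2e^{-\Psi_A(\lambda,\epsilon)t}\norm{a_A}_1$ of the stated bound originates. Without an explicit treatment of this term your decomposition yields only the first two contributions and the claimed inequality does not follow.

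The rest of your argument matches the paper: the eigenvector defect is bounded by $\Psi_A(v,\epsilon)$ via Theorem \ref{errorBound1}, and the eigenvalue defect is converted into the decay rate $e^{-\Psi_A(\lambda,\epsilon)t}$ using $\lambda(0)\ge\lambda(1)-\Psi_A(\lambda,\epsilon)$ (the paper simply bounds $\absolute{e^{-(\lambda(1)-\lambda(0))t}-1}\le 1$ and then replaces $e^{-\lambda(0)t}$; your regime split with the mean value theorem is a more explicit version of the same move, though note that for an eigenvalue with $\lambda(0)<\Psi_A(\lambda,\epsilon)$, such as the zero mode of a Laplacian, the large-$t$ half of your split needs extra care). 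Reinstate the coefficient-drift term and bound it with the unitary change-of-basis argument, and the proof goes through.
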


\begin{proof}
Write
\[
\alpha_{\lambda(1)}=\beta_{\lambda_0}+\beta_\lambda
\]
and first observe that
\begin{align*}
\norm{\alpha_{\lambda(1)} e^{-\lambda(1)t} v(1)-\beta_{\lambda_0} e^{-\lambda_0t} v_0}_2
&\le
e^{-\lambda(1)t}\absolute{\alpha_{\lambda(1)}} \norm{v(1)-v_0}_2
\\
&+
e^{-\lambda_0t}\absolute{e^{-(\lambda(1)-\lambda_0)t}\alpha_{\lambda(1)}-\beta_{\lambda_0}}\norm{v_0}_2
\end{align*}
Now,
\begin{align*}
\absolute{
e^{-(\lambda(1)-\lambda_0)t}\alpha_{\lambda(1)}
-\beta_{\lambda_0}
}
&\le\absolute{
e^{-(\lambda(1)-\lambda_0)t}\alpha_{\lambda(1)}-\alpha_{\lambda(1)}
}+\absolute{\alpha_{\lambda(1)}-\beta_{\lambda_0}}
\\
&\le\absolute{\alpha_{\lambda(1)}}+\absolute{\alpha_{\lambda(1)}-\beta_{\lambda_0}}\,.
\end{align*}
It follows that
\begin{align*}
\norm{u_{\overline{L}_\epsilon}-u_{\overline{H}_\epsilon}}_2
&\le
\sum\limits_{\lambda(1)\in\Spec\overline{A}(1)}
\norm{
\alpha_{\lambda(1)}e^{-\lambda(1)t}
v(1)-\beta_{\lambda_0}e^{-\lambda_0t}v_0
}_2
\\
&\le
e^{-\lambda(1)t}\norm{v(1)-v_0}_2
\sum\limits_{\lambda(1)}\absolute{\alpha_{\lambda(1)}}
+e^{-\lambda_0t}\norm{v_0}_2
\sum\limits_{\lambda(1)}\absolute{\alpha_{\lambda(1)}-\beta_{\lambda_0}}
\\
&\le
e^{-\lambda(1)t}\norm{v(1)-v_0}_2
\sum\limits_{\lambda(1)}\absolute{\alpha_{\lambda(1)}}
+e^{-\lambda_0t}
\sum\limits_{\lambda(1)}\absolute{\alpha_{\lambda(1)}-\beta_{\lambda_0}}\,.
\end{align*}
Write
\[
b_A=(\beta_{\lambda_0})\in\mathds{C}^{b_0(\Gamma_\epsilon)}
\]
Then
\[
b_A=Oa_A
\]
for some unitary matrix $O$, and
\[
\norm{a_A-b_A}_1
=\norm{a_A-Oa_A}_1\le\norm{1-O}\norm{a_A}_1\le 2\norm{a_A}_1
\]
which yields
\[
\norm{u_{\overline{L}_\epsilon}-u_{\overline{H}_\epsilon}}
\le 
\left(e^{-\lambda(1)t}\norm{v(1)-v_0}_2+2e^{-\lambda_0 t}
\right)\norm{a_A}_1\,.
\]
Since
\[
\lambda_0\ge\lambda(1)-
\underbrace{C_{\lambda}
\norm{\overline{B}_\epsilon}_F
\cdot\left(1+\frac{\norm{\overline{B}_\epsilon}_1}{d_{\min}/6}\right)^{-1}}_{=\Psi_{{A}}(\lambda,\epsilon)}\,,
\]
this yields
\[
\norm{u_{\overline{L}_\epsilon}-u_{\overline{H}_\epsilon}}
\le\left(
e^{-\lambda(1)t}\Psi_{{A}}(v,\epsilon)
+2e^{-\Psi_A(\lambda,\epsilon)t}
\right)\norm{a_A}_1
\]
For the same reason,
\[
H_\epsilon(\tau)=L_0+\tau A_\epsilon
\]
yields
\[
\norm{u_{L_0}-u_{H_\epsilon}}
\le \left(
e^{-\gamma(t)t}\Psi_{H}(w,\epsilon)+2e^{-\Psi_H(\gamma,\epsilon)t}
\right)\norm{a_H}_1
\]
This proves the assertion.
\end{proof}







\section*{Acknowledgements}

\'Angel Mor\'an Ledezma,  
Paulina Halwas, Leon Nitsche,  
David Weisbart and Wilson Z\'u\~niga-Galindo are warmly thanked for fruitful discussions. 
This work is partially supported by the Deutsche Forschungsgemeinschaft under project number 469999674.

\bibliographystyle{plain}
\bibliography{biblio}

\begin{thebibliography}{10}

\bibitem{AS1972}
M.~Abramowitz and I.A. Stegun.
\newblock {\em Handbook of Mathematical Functions with Formulas, Graphs and
  Mathematical Tables}.
\newblock Applied Mathematics Series 55. National Bureau of Standards, 1972.
\newblock 10th Printing.

\bibitem{Alexandrov1937}
P.S. Alexandrov.
\newblock Diskrete {R\"aume}.
\newblock {\em Matematicheskii Sbornik (N.S.)}, 2:501--518, 1937.

\bibitem{Bailey1935}
W.N. Bailey.
\newblock {\em Generalized hypergeometric series}.
\newblock Cambridge tracts in mathematics and mathematical physics 32.
  Cambridge University Press, 1935.

\bibitem{Bamieh}
B.~Bamieh.
\newblock A tutorial on matrix perturbation theory (using compact matrix
  notation).
\newblock arXiv:2002.05001 [math.SP], 2020.

\bibitem{Baumgaertel1985}
H.~Baumgärtel.
\newblock {\em Analytic Perturbation Theory for Matrices and Operators},
  volume~15 of {\em Operator Theory}.
\newblock Birkh\"auser, Basel, 1985.

\bibitem{locUltra}
P.E. Bradley.
\newblock On the local ultrametricity of finite metric data.
\newblock arXiv:2408.07174 [cs.IR], 2024.

\bibitem{IndexTopo_p}
P.E. Bradley and \'A.M. Ledezma.
\newblock Approximating diffusion on finite multi-topology systems using
  ultrametrics.
\newblock arXiv:2411.00806 [cs.DM], 2024.

\bibitem{NonAutonomous}
P.E. Bradley and \'A.M. Ledezma.
\newblock A non-autonomous p-adic diffusion equation on time changing graphs.
\newblock arXiv:2407.21555 [math.AP], To appear in Reports on Mathematical
  Physics.

\bibitem{GNC2010}
M.~Gavish, B.~Nadler, and R.R. Coifman.
\newblock Multiscale wavelets on trees, graphs and high dimensional data:
  Theory and applications to semi supervised learning.
\newblock In {\em Proceedings of the 27th International Conference on Machine
  Learning}, Haifa, Israel, 2010.

\bibitem{Kato1980}
T.~Kato.
\newblock {\em Perturbation Theory for Linear Operators}.
\newblock Classics in Mathematics. Springer, Berlin Heidelberg, reprint of the
  1980 edition, 1995.

\bibitem{XK2005}
A.Yu. Khrennikov and S.V. Kozyrev.
\newblock Wavelets on ultrametric spaces.
\newblock {\em Appl. Comput. Harmon. Anal.}, 19:61--76, 2005.

\bibitem{Kozyrev2002}
S.~V. Kozyrev.
\newblock Wavelet theory as $p$-adic spectral analysis.
\newblock {\em Izvestiya: Mathematics}, 66(2):367--376, 2002.

\bibitem{Kozyrev2004}
S.V. Kozyrev.
\newblock $p$-adic pseudodifferential operators and $p$-adic wavelets.
\newblock {\em Theoretical and Mathematical Physics}, 138(3):322--332, 2004.

\bibitem{KMR2011}
A.~Kriegl, P.W. Michor, and A.~Rainer.
\newblock {Denjoy-Carleman} differentiable perturbation of polynomials and
  unbounded operators.
\newblock {\em Integral Equations and Operator Theory}, 71(3):407--416, 2011.

\bibitem{Ledezma-Energy}
\'A.M. Ledezma.
\newblock Time-varying energy landscapes and temperature paths: Dynamic
  transition rates in locally ultrametric complex systems.
\newblock arXiv:2411.03406 [math-ph], 2024.

\bibitem{PW2024}
T.~Pierce and D.~Weisbart.
\newblock Brownian motion in the $p$-adic integers is a limit of discrete time
  random walks.
\newblock arXiv:2407.05561 [math.PR], 2024.

\bibitem{Weisbart2024}
D.~Weisbart.
\newblock $p$-adic {Brownian} motion is a scaling limit.
\newblock {\em J. Phys. A: Math. Theor.}, 57:205203, 2024.

\bibitem{Yoshida1997}
M.~Yoshida.
\newblock {\em Hypergeometric functions, my love: modular interpretations of
  configuration spaces}, volume~32 of {\em Aspects of mathematics}.
\newblock Vieweg, Braunschweig, 1997.

\bibitem{ZunigaNetworks}
W.~{Z\'{u}\~{n}iga-Galindo}.
\newblock Reaction-diffusion equations on complex networks and {Turing}
  patterns, via $p$-adic analysis.
\newblock {\em Journal of Mathematical Analysis and Applications},
  491(1):124239, 2020.

\bibitem{Zuniga2022}
W.A. Zúñiga-Galindo.
\newblock Ultrametric diffusion, rugged energy landscapes and transition
  networks.
\newblock {\em Physica A: Statistical Mechanics and its Applications},
  597:127221, 2022.

\end{thebibliography}

\end{document}